\newcommand{\grad}{\nabla}
\newcommand{\R}{\mathbb R}
\newcommand{\e}{\mathrm{e}}                
\newcommand{\de}{\mathop{}\!\mathrm{d}}  
\newcommand{\pa}{\mathop{}\!\partial}
\newcommand{\opde}[1]{{\dfrac {\mathrm{d} \phantom{#1}}{\mathrm{d} {#1}}}}
\newcommand{\eps}{\varepsilon}
\newcommand{\fcr}{\mathcal X}
\newcommand{\Cal}[1]{{\mathcal {#1}}}
\newcommand{\debole}{\rightharpoonup}
\newcommand{\wass}{{\mathscr{W}}}
\newcommand{\disc}{{\mathscr{D}}}
\numberwithin{equation}{section}
\theoremstyle{definition}
\newtheorem{Definition}{Definition}[section]
\theoremstyle{plain}
\newtheorem{Theorem}{Theorem}[section]
\newtheorem{lemma}{Lemma}[section]
\newtheorem{Proposition}{Proposition}[section]
\theoremstyle{remark}
\title[Mean-field limit for topological interactions]
{Mean-field limit for particle systems with topological interactions}
\author[D.\ Benedetto]{Dario Benedetto}
\address{Dario Benedetto \hfill\break \indent
	Dipartimento di Matematica, Universit\`a di Roma `La Sapienza'
	\hfill\break \indent
	P.le Aldo Moro 2, 00185 Roma, Italy}
\email{benedetto@mat.uniroma1.it}
\author[E.\ Caglioti]{Emanuele Caglioti}
\address{Emanuele Caglioti \hfill\break \indent
	Dipartimento di Matematica, Universit\`a di Roma `La Sapienza'
	\hfill\break \indent
	P.le Aldo Moro 2, 00185 Roma, Italy}
\email{caglioti@mat.uniroma1.it}
\author[S.\ Rossi]{Stefano Rossi}
\address{Stefano Rossi\hfill\break \indent
	Dipartimento di Matematica, Universit\`a di Roma `La Sapienza'
	\hfill\break \indent
	P.le Aldo Moro 2, 00185 Roma, Italy}
\email{stef.rossi@uniroma1.it}
\begin{document}

\date{\today}
\begin{abstract}
  The mean-field limit for systems of self-propelled agents
  with ``topological interaction'' cannot be obtained by means of the
  usual Dobrushin approach.
  We get a result on this direction
  by adapting to the multidimensional case
  the techniques developed by Trocheris in 1986
  to treat the Vlasov-Poisson equation in one dimension.
\end{abstract}

\keywords{mean-field limit, topological interaction, Cucker-Smale model}
\subjclass[2020]{
  35Q83, 
  82C40, 
  35Q92
}

\maketitle

\pagestyle{headings}

\section{Introduction}

Many interesting physical systems can be described 
at the microscopic level as particle dynamics 
and at the mesoscopic level with kinetic equations.
In the wide field of two-body interactions,
the link between these two regimes is mathematically well understood
in the case of the mean-field limit, 
i.e. when the density of the particles
diverges with their number $N$, the mean free path
vanishes as $1/N$ and the interaction intensity scales with $1/N$.
In this limit, each particle feels the interaction with the others
as a mean.

A rigorous mathematical proof of this result can be done 
in the case of two-body interactions with sufficiently regular 
potentials.
This classical achievement has been obtained
independently by several authors in the '$70$s 
(see \cite{BH,dobrushin,Neuzert}) 
and its explanation is particularly clear
in the Dobrushin's argument \cite{dobrushin}
where the result follows by noticing that
the empirical measure associated with the particle system
is a weak solution of the mean-field equation;
the proof follows by showing the weak continuity, w.r.t the initial datum,
of the weak solutions. 

Although the theory for regular pairwise interactions 
is sufficiently well understood, 
going beyond it considering singular potentials,
is instead a harder task.
This is the case of the three-dimensional
Vlasov-Poisson equation, which is
the most important equation of plasma physics and of galactic dynamics,
based on the choice of the Coulomb or Newton potential, respectively.
In this equation, the potential $1/r$ is singular at the origin and does not belong to any $L^p$ space.
Although the mean-field limit for the Vlasov-Poisson equation remains an open problem,
 there has been important progress in recent years,
see the works \cite{HJ1,HJ2} where the mean-field limit is proven for potentials with singularities ``weaker than $1/r$'' and also \cite{L,LP}. 
However, in the case of the one-dimensional Vlasov-Poisson equation, the problem
has been solved in \cite{trocheris, trocheris2}
and with a simpler proof in \cite{Hauray},
being the force discontinuous, but not diverging.

The mean-field limit is a case
of propagation of chaos, i.e. the $j$-particle distribution function
factorize in the limit. This property is the key for
obtaining a kinetic description of the particle dynamics 
(see for instance \cite{S} and \cite{Chaintron-Diez,G,J} for some reviews
on this point of view).

In recent years, the conceptual and mathematical apparatus of
kinetic equations 
has been used in the study of self-propelled particle systems
of 
biological nature, in particular for the motion of swarms
and other animals.
Starting with the pioneering paper in \cite{vicsek},
several models have been proposed to explain the
evolution of these systems.
In the simplest
\cite{CS1,CS2,vicsek}, a bird is modeled as a self-propelling particle that 
interacts with its neighbors. The interaction is such that neighboring birds
tend to align their velocities.
For many of these models, the mean-field limit has often been used to obtain
a kinetic description of the dynamics (see, for instance,
\cite{HaLiu, CFTV, CCJR, carrillo-wasserstein, GolseHa, bbc}).

A few years ago, supported by observational data
(\cite{top1, top2, Attanasi_et_al}),
``topological'' models for interaction were introduced:
an agent reacts to the presence of another not according to the distance, 
but according to the proximity ranking
(see eq.s \eqref{eq:CS}, \eqref{eq:pij}, \eqref{eq:Mpart} below
for a rigorous formulation).
These models come out of the case of two-body interaction,
and present various problems in their kinetic treatment.
In particular, the solutions of the kinetic equation
are not weakly continuous w.r.t. the initial datum
and there are also some
difficulties in defining the particle motion.

In this paper we prove a result on the mean-field limit for topological
models.
We focus our attention on the topological Cucker-Smale model,
but, with the same ideas, it is possible to consider more general
cases.
A first result in this direction has been proved in 
\cite{haskovec}, for a smoothed version of the model in which
the weak continuity in the initial datum is recovered.
We also mention that a kinetic Boltzmann equation for a stochastic particle
model with rank-based interaction has been obtained in
\cite{deg-pul}, by using the BBGKY hierarchy.

\vskip.10cm

We formulate the problem and summarize our results.
A Cucker-Smale type model for the motion of $N$ agents,
in the mean-field scaling, is the system
\begin{equation}
  \label{eq:CS}
  \left\{
  \begin{aligned}
    &\dot X_i(t)=V_i(t)\\
    &\dot V_i(t)= \frac 1N \sum_{j=1}^N p_{ij} (V_j(t) - V_i(t)),
  \end{aligned}
  \right.
\end{equation}
where the ``communication weights'' $\{p_{ij}\}_{i,j=1}^N$
are positive functions that take into account 
the interactions between agents. In classical models,
$p_{ij}$ depends only on the distance $|X_i - X_j|$
between the agents.
In topological models the weights depend on the positions of the agents
by their rank
\begin{equation}
  \label{eq:pij}
p_{ij} \coloneq K\bigl(M(X_i,|X_i - X_j|)\bigr),
\end{equation}
where $K \colon [0,1] \to \R^+$
and, for $r>0$, the function
\begin{equation}
  \label{eq:Mpart}
  M(X_i,r) \coloneq \frac 1N \sum_{k=1}^N \fcr\{|X_k - X_i| \le  r\}
\end{equation}
counts the number of agents at distance
less than or equal to $r$ from $X_i$, normalized with $N$.
Note that in this case $p_{ij}$ is a stepwise function of the positions
of all the agents.
In the sequel we assume that $K$ is a positive decreasing
function, Lipschitz continuous, and
such that $\int_0^1 K(z) \, \de z= \gamma$.

In the mean-field limit $N\to +\infty$, the one-agent distribution
function $f_t = f(t,x,v)$ is expected to verify the equation 
\begin{equation}
\label{eq:principale}
\partial_t f_t + v \cdot \nabla_{x} f_t+\nabla_v \cdot \left(
W[S f_t,f_t](x,v)
f_t\right)=0,
\end{equation}
where $Sf_t(x)\coloneq\int f_t(x,v) \de v$ is the spatial distribution
and where, given a probability density $f$ in $\R^d\times \R^d$
and a probability density $\rho$ in $\R^d$, 
\begin{equation}
\label{eq:interazione}
W[\rho, f](x,v)\coloneq \int K\left(M[\rho](x,|x-y|)\right)\,(w-v)f(y,w) \, \de y \, \de w,
\end{equation}
with
\begin{equation}
  \label{eq:M}
  M[\rho](x,r)\coloneq\int_{|x'-x|\le r} \rho(x') \, \de x'.
\end{equation}

A weak formulation of this equation is given
requiring that the solution $f_t$ fulfills
$$
\int \alpha(x,v) \de f_t(x, v) =
\int \alpha\left(X_t(x,v),V_t(x,v)\right) \de f_0(x,v)
$$
for any $\alpha \in C_b(\R^d\times \R^d)$, 
where $f_0$ is the initial probability measure 
and $(X_t(x,v),V_t(x,v))$ is the flow defined by
\begin{equation}
  \label{eq:flusso}
  \left\{
    \begin{aligned}
      &\dot X_t(x,v)=V_t(x,v)\\ 
      &\dot V_t(t,x,v)=W[Sf_t,f_t](X_t(x,v,),V_t(x,v))  \\
      & X_0(x,v) = x,\ \ V_0(x,v) = v.\\
    \end{aligned}
  \right.
\end{equation}
In other words, $f_t$ is the push-forward
of $f_0$ along the flow generated by the velocity field, determined
by  $f_t$ itself.

It is  easy to verify that the empirical measure
\[
  \mu^N_t \coloneq
  \frac{1}{N}
  \sum_{i=1}^N \delta_{X_i^N(t)} \, \delta_{V_i^N(t)}
\]
associated with the solution of \eqref{eq:CS}, \eqref{eq:pij} and
\eqref{eq:Mpart}
is a weak solution of \eqref{eq:principale}.
Namely,
$M[S\mu^N_t](X,r)$ is exactly 
$M(X,r)$  defined in \eqref{eq:Mpart}
(from now on we use the more complete notation
$M[S\mu^N_t](X,r)$). Thus, we can rewrite the agent evolution in \eqref{eq:CS}
as 
\begin{equation}
  \label{eq:particelle}
  \left\{
    \begin{aligned}
      &\dot X_i^N(t)=V_i^N(t)\\
      &\dot V_i^N(t)= W[S\mu^N_t,\mu^N_t](X^N_i(t),V^N_i(t)).
    \end{aligned}
  \right.
\end{equation}

In the Dobrushin approach to the mean-field limit,
the result is achieved from 
this fact and from the weak continuity, w.r.t the initial datum,
of the weak solutions of \eqref{eq:principale}. 
We cannot use this approach in presence of topological
interaction, since in general
the solutions of 
\eqref{eq:flusso} are not weakly continuous
w.r.t the initial datum (see Section \ref{sez:agenti}).
We can overcome this difficulty if the solution 
of \eqref{eq:principale} has a bounded density.
To obtain our result, we adapt the ideas used in 
\cite{trocheris} for the derivation
of the one-dimensional Vlasov equation in presence
of discontinuity of the force. In particular we prove that

\noindent
[Thm. \ref{teo:agenti}]
the $N$-particle dynamics is well defined, 
except for a set of measure zero;

\noindent
[Thm. \ref{teo:emfl}] if $f_0$ is bounded, there exists a unique
weak solution $f_t$ of the topological Cucker-Smale equation, which is
bounded;

\noindent
[Thm. \ref{teo:mfl}] if $\mu^N_t$ solves \eqref{eq:particelle} and
$\mu^N_0\debole \mu_0$, then $\mu^N_t\debole f_t$.

\vskip,3cm
We divide the work as follows: in Section \ref{sez:metriche} we
discuss some properties of the ``discrepancy distance'',
the main tool for dealing with topological interactions. 
In Section \ref{sez:agenti} 
we discuss existence, uniqueness and regularity of the agent dynamics
\eqref{eq:particelle}, proving Thm. \ref{teo:agenti}.
In Section \ref{sez:teo-linfinito}
we discuss existence, uniqueness and regularity of the weak solutions
of the mean-field equation
\eqref{eq:principale} with bounded initial datum, proving Thm. \ref{teo:emfl}.
In Section \ref{sez:limite} we prove Thm. \eqref{teo:mfl}.

\section{Distances and weak convergence}
\label{sez:metriche}

We recall that the 1-Wasserstein  distance $\wass $ of two 
probability measures $\rho_1$ and $\rho_2$ on $\R^d$ can be defined by duality
with Lipschitz functions:
$$
\begin{aligned}
\wass (\rho_1,\rho_2) &= \sup_{\phi \in C_b(\R^d), \text{Lip}(\phi) \le  1}
\int \phi (\de\rho_1 - \de\rho_2)\\
&=
\sup_{\phi \in C^1_b(\R^d), \|\grad \phi\|_{\infty} \le  1}
\int \phi (\de\rho_1 - \de\rho_2),
\end{aligned}
$$
where $\text{Lip}(\phi)$ is the Lipschitz constant of $\phi$.

The counter of the number of particles in \eqref{eq:M}
is not continuous w.r.t. $\wass$, so we work with the
weaker topology induced by another distance, the discrepancy,
defined as 
$$\disc (\rho_1,\rho_2) \coloneq
\sup_{x,r>0} \left | \int_{B_r(x)} \de\rho_1 -
  \int_{B_r(x)} \de\rho_2\right|.$$
Here and after, we denote by $B_r(x)$ the closed ball of center
$x$ and radius $r$ in $\R^d$.
In the sequel, 
we also indicate by $B_R$ the closed ball $B_R(0)$.
The discrepancy distance is mostly used in one dimension
to quantify the uniformity of sequence of points
(see \cite{KN,GS}), but
its multidimensional version is cited in \cite{Neuzert}, in
the contest of kinetic limits.


By definition, it holds the following proposition.
\begin{Proposition}[Lipschitzianity of $M$ w.r.t. $\disc$]
  \label{prop:contM}
  Let $\rho_1 $ and $\rho_2$ be two probability measures on $\R^d$.
  Then, for any $x\in \R^d$ and $r>0$,
  $$|M[\rho_1](x,r) - M[\rho_2](x,r)|\le \disc(\rho_1,\rho_2).$$
\end{Proposition}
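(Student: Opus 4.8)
The plan is essentially to unwind the definitions, since the statement is definitional in character. First I would observe that, for a probability measure $\rho$ on $\R^d$, the quantity $M[\rho](x,r)=\int_{|x'-x|\le r}\rho(x')\,\de x'$ is nothing but the $\rho$-mass of the closed ball $B_r(x)$, i.e. $M[\rho](x,r)=\int_{B_r(x)}\de\rho$. Indeed the set $\{x': |x'-x|\le r\}$ appearing in \eqref{eq:M} coincides with the closed ball $B_r(x)$ that enters the definition of the discrepancy, so the two objects are literally integrals over the same region.

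Then, for the fixed $x\in\R^d$ and $r>0$ in the statement, I would write the difference as
$$
|M[\rho_1](x,r)-M[\rho_2](x,r)| = \left|\int_{B_r(x)}\de\rho_1 - \int_{B_r(x)}\de\rho_2\right|,
$$
and recognize the right-hand side as exactly one of the terms appearing in the supremum that defines $\disc(\rho_1,\rho_2)$. Since every member of a family of nonnegative reals is bounded above by its supremum, the claimed inequality follows at once, uniformly in the chosen $x$ and $r$.

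I do not expect any genuine obstacle here: the content of the proposition is precisely that the discrepancy distance is built so as to control ball masses uniformly, and $M$ is by construction one such ball mass. The only point worth a word of care is to confirm that $M$ and $\disc$ refer to the \emph{same} balls --- namely the closed ball $B_r(x)$ --- which is the convention fixed in the preceding text; once this is noted, the chain of (in)equalities above is immediate and completes the proof.
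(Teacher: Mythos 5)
Your proof is correct and is exactly the argument the paper intends: the paper offers no proof at all, introducing the proposition with ``by definition,'' and your unwinding of $M[\rho](x,r)$ as the $\rho$-mass of the closed ball $B_r(x)$, which is one of the terms in the supremum defining $\disc$, is precisely that definitional observation.
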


We can also define $\disc$ in terms of regular
functions. Let $X$ be the subset of $C_b^1([0,+\infty);\R)$,
and
define 
$$\|\phi\|_X \coloneq \int_0^{+\infty} |\phi'(r)|\de r.$$
Then
$$\disc (\rho_1,\rho_2) = \sup_{\phi \in X: \, \|\phi\|_X \le 1}
\sup_x
\int \phi\bigl(|x -y|\bigr)\bigl(\de\rho_1(y)-\de\rho_2(y) \bigr).$$
This assertion is an easy consequence of the following lemma.
\begin{lemma}
  \label{lemma:equiv}
  Let $g_1$ and $g_2$ be two probability measures on $[0,+\infty)$.
  Then
  \begin{equation}
    \label{eq:ugu-r}
    \sup_{r\ge 0} \left|\int_{[0,r]}\de g_1-\int_{[0,r]}\de g_2\right| =
    \sup_{\phi\in X:\, \|\phi\|_X \le 1} \int_0^{+\infty} \phi \left(\de g_1 -\de g_2\right).
  \end{equation}
\end{lemma}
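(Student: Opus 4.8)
The plan is to reduce both sides of \eqref{eq:ugu-r} to a single expression involving the difference of the cumulative distributions. Set $G_i(r)\coloneq\int_{[0,r]}\de g_i = g_i([0,r])$ and $D(r)\coloneq G_1(r)-G_2(r)$, so that the left-hand side of \eqref{eq:ugu-r} is exactly $\sup_{r\ge0}|D(r)|$. The crucial structural remark is that $g_1$ and $g_2$ are \emph{both} probability measures, so $\int_0^{+\infty}(\de g_1-\de g_2)=0$ and hence $\int\phi\,(\de g_1-\de g_2)$ is unchanged if we subtract from $\phi$ any constant. When $\|\phi\|_X\le1$ the derivative $\phi'$ is integrable, so $\phi(\infty)\coloneq\lim_{r\to+\infty}\phi(r)$ exists and $\phi(r)-\phi(\infty)=-\int_r^{+\infty}\phi'(s)\,\de s$. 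Subtracting $\phi(\infty)$ and applying Fubini, I would obtain the master identity
\begin{equation}
\label{eq:master-equiv}
\int_0^{+\infty}\phi\,(\de g_1-\de g_2)=-\int_0^{+\infty}\phi'(s)\,D(s)\,\de s,
\end{equation}
the Lebesgue integral on the right being insensitive to the values of $D$ at the at most countably many atoms of $g_1$ and $g_2$.

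From \eqref{eq:master-equiv} the inequality ``$\ge$'' in \eqref{eq:ugu-r} is immediate: for every admissible $\phi$,
$$
\left|\int_0^{+\infty}\phi\,(\de g_1-\de g_2)\right|
\le\int_0^{+\infty}|\phi'(s)|\,|D(s)|\,\de s
\le\Bigl(\sup_{r\ge0}|D(r)|\Bigr)\,\|\phi\|_X
\le\sup_{r\ge0}|D(r)|,
$$
and, since $\phi\mapsto-\phi$ preserves $\|\cdot\|_X$, the supremum of the signed quantity over $\phi$ equals the supremum of its absolute value, which yields the bound.

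For the reverse inequality I would exhibit near-optimal test functions. Fix $r_0\ge0$ and $\eta>0$ and let $\phi_\eta\in X$ be a smooth nonincreasing function equal to $1$ on $[0,r_0]$ and to $0$ on $[r_0+\eta,+\infty)$; then $\phi_\eta'\le0$ with $\int_0^{+\infty}|\phi_\eta'|=-\int_0^{+\infty}\phi_\eta'=1$, so $\|\phi_\eta\|_X=1$. As $\eta\to0^+$ one has $\phi_\eta\to\mathbbm{1}_{[0,r_0]}$ pointwise (with the value $1$ at $r_0$), whence dominated convergence, $g_1-g_2$ being a finite signed measure and $|\phi_\eta|\le1$, gives $\int\phi_\eta\,(\de g_1-\de g_2)\to(g_1-g_2)([0,r_0])=D(r_0)$. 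Running the same construction with $-\phi_\eta$ produces $-D(r_0)$, so the right-hand side of \eqref{eq:ugu-r} dominates $|D(r_0)|$; taking the supremum over $r_0$ closes the argument. I expect the reverse inequality to be the only genuinely delicate step: the measures may carry atoms, so $D$ jumps and the supremum $\sup_r|D(r)|$ need not be attained, and one must make sure that the smooth cutoff — placed on $[r_0,r_0+\eta]$, i.e.\ just to the right of $r_0$ — recovers the closed-interval value $D(r_0)$ including any atom at $r_0$, rather than a spurious average across the jump; the dominated-convergence step is precisely what makes this rigorous.
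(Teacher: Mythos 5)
Your proposal is correct and follows essentially the same route as the paper: the upper bound comes from the integration-by-parts identity $\int\phi\,(\de g_1-\de g_2)=-\int\phi'(s)\,(G_1-G_2)(s)\,\de s$, and the lower bound from smooth cutoffs approximating $\mathbbm{1}_{[0,r_0]}$ from the right together with a passage to the limit. Your treatment is slightly more careful than the paper's on two points it leaves implicit — justifying the vanishing of boundary terms by subtracting $\phi(\infty)$ (using that both measures are probabilities) and noting that the countably many atoms do not affect the Lebesgue integral of $\phi'D$ — but these are refinements of the same argument, not a different one.
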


\begin{proof}
  Fix $r>0$, there exists
  $\phi_{r,\eps}\in X$ with $\|\phi_{r,\eps}\|_X = 1$
  and such that $\phi_{r,\eps} (s)=1$ if $0 \le s \le r$ and
  $\phi_{r,\eps}(s) = 0$ if $s\ge r+\eps$. For any measure $g$, 
  $$\lim_{\eps \to 0}\int_0^{+\infty} \bigl(\phi_{r,\eps}(s) - \fcr\{s\in [0,r]\}\bigr) \de g(s)=0,$$
  then
  $$
  \int_{[0,r]} (\de g_1- \de g_2) = \lim_{\eps \to 0} \int_0^{+\infty} \phi_{r,\eps}
  (\de g_1 - \de g_2) \le
  \sup_{\phi\in X:\, \|\phi\|_X \le 1} \int_0^{+\infty} \phi (\de g_1 - \de g_2).$$
  To prove the opposite  inequality, we denote by $G_1$ and $G_2$
  the distribution functions
  of $g_1$ and $g_2$:
  $$G_i (r) \coloneq \int_{[0,r]} \de g_i.$$
  Then, integrating by parts,
  $$\int_0^{+\infty} \phi (\de g_1 - \de g_2) = -
  \int_0^{+\infty} \phi'(r) \bigl(G_1(r) - G_2(r)\bigr)\de r \le
  \|\phi\|_X \|G_1-G_2\|_{\infty}.$$
  We conclude the proof by noticing that
  $\|G_1-G_2\|_{\infty}$ is exactly the left-hand-side of
  \eqref{eq:ugu-r}.
\end{proof}


  For our purposes,
  we  need the equivalence of
  $\disc$ and $\wass$ in the case in which one of the two measures
  has bounded density.
  We note  that in the general case
  the equivalence is false, as can
  be easily checked by considering two Dirac measures
  $\delta_{x_1}$ and $\delta_{x_2}$:
  $\wass$ vanishes when $|x_1 - x_2|\to 0$, while $\disc$ is one
  whenever $x_1 \neq x_2$.
  Nevertheless, 
  using the covering principles as in \cite{Bes},
  for measures on a compact set,
  it can be proved the continuity of the Wasserstein distance $\wass$ 
  w.r.t. the discrepancy distance $\disc$.
  For the sake of completeness, 
  we give a proof in the appendix, 
  although this property is not really necessary for our results.


\vskip.3cm

In the sequel, in the definition of $\disc$ we choose  functions in 
$\phi\in C([0,+\infty),\R)$, with first derivative
continuous up to a finite number of jumps. With abuse of notation,
we keep calling this set of functions $X$.
Let us expose some technical properties.

Given $\phi\in X$, we define some useful regularizations, 
$\phi^\pm$, $\phi_\eps$ and $\psi_\eps$, with $\eps>0$, as follows.
Denoting by $\tilde \phi$ the function
$$
\tilde \phi (r) \coloneq \int_0^r |\phi'(s)|\de s,$$
we define 
$$\phi^\pm (r)\coloneq \left\{
  \begin{aligned}
    &\frac 12 (\tilde \phi(r) \pm \phi(r)),\  &\text{ if }r\ge 0,\\
    &\pm \frac 12 \phi(0), &\text{ if }r<0,
  \end{aligned}
\right.
$$
and
\begin{equation}
	\label{phieps}
	\phi_\eps(r)  \coloneq \phi^+ (r+\eps) - \phi^-(r-\eps).
\end{equation}
Finally, fixed a regular mollifier  $\eta$ supported in $(0,1)$,
we define 
\begin{equation}
	\label{psieps}
\psi_\eps(r) \coloneq \int_0^\eps \eta_\eps(s) \phi^+(r+s) \de s -
\int_0^\eps \eta_\eps(s) \phi^-(r-s) \de s.
\end{equation}
where $\eta_\eps(s)\coloneq\eps^{-1}\eta(s/\eps)$.

We summarize the properties of these regularizations
in the following lemma, where we indicate with $c$ any constat
which does not depends on $\phi$ and $\eps$.
\begin{lemma}{\phantom{a}}
  \label{lemma:phipm}
  \begin{enumerate}[\it i)]
  \item 
    $\phi^\pm$ are not decreasing. Moreover
    \begin{equation}
    	\label{stima1}
    	\int_0^{+\infty} (\phi^\pm)'(r)\, \de r \le  \|\phi\|_X
    \end{equation}
    and 
    $\phi (r) = \phi^+ (r) - \phi^-(r)$
    for $r\ge 0$.
  \item
    $\phi_\eps\in X$, 
    $\phi(r) \le \phi_\eps(r)$
    and
    \begin{equation}
    	\label{stima2}
   \int_0^{+\infty} \bigl(\phi_\eps(r) - \phi(r)\bigr)\de r \le 2\eps \|\phi\|_X.
    \end{equation}
  \item  $\psi_\eps(r)\ge \phi(r)$. Moreover $\psi_\eps$ is a $C^1$
    function in $X$,
    \begin{equation}
    	\label{stima3}
    	\|(\psi_\eps)'\|_{\infty}
    	\le \frac 2\eps \|\eta\|_\infty \|\phi\|_X
    \end{equation}
    and
    \begin{equation}
    	\label{stima4}
    	\int_0^{+\infty}|\psi_\eps(r) - \phi(r)|\de r \le c\eps \|\phi\|_X.
      \end{equation}
  \end{enumerate}
\end{lemma}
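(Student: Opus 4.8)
The plan is to reduce everything to the single observation that $\tilde\phi'(r)=|\phi'(r)|$, so that the derivatives of the two pieces are exactly the positive and negative parts of $\phi'$:
$$(\phi^+)'(r)=\tfrac12\bigl(|\phi'(r)|+\phi'(r)\bigr),\qquad (\phi^-)'(r)=\tfrac12\bigl(|\phi'(r)|-\phi'(r)\bigr).$$
Both are nonnegative, which gives the monotonicity in i); since each is bounded above by $|\phi'|$, integrating in $r$ yields \eqref{stima1}; and subtracting them gives $\phi^+-\phi^-=\phi$ on $[0,+\infty)$, by the fundamental theorem of calculus together with the chosen values $\phi^+(0)=\tfrac12\phi(0)$, $\phi^-(0)=-\tfrac12\phi(0)$. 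I would record here that the constant extensions to $r<0$ are precisely chosen so that $\phi^+$ and $\phi^-$ remain nondecreasing on all of $\R$; this is exactly what makes the shifts in ii) and iii) behave.

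For ii), monotonicity does the qualitative work: since $\phi^+(r+\eps)\ge\phi^+(r)$ and $\phi^-(r-\eps)\le\phi^-(r)$, definition \eqref{phieps} gives $\phi_\eps\ge\phi^+-\phi^-=\phi$ at once, while $\phi_\eps\in X$ because it is a difference of two translates of functions in $X$. For \eqref{stima2} I would write $\phi_\eps(r)-\phi(r)=[\phi^+(r+\eps)-\phi^+(r)]+[\phi^-(r)-\phi^-(r-\eps)]$, represent each bracket as the integral of its nonnegative derivative over an interval of length $\eps$, and integrate in $r$ using Tonelli. Swapping the order of integration turns each term into $\int_0^{+\infty}(\phi^\pm)'(t)\,\ell(t)\,\de t$ with $0\le\ell(t)\le\eps$, hence each is at most $\eps\int_0^{+\infty}(\phi^\pm)'\le\eps\|\phi\|_X$ by \eqref{stima1}, and the two contributions sum to $2\eps\|\phi\|_X$.

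Part iii) runs the same mechanism through the mollifier. Using $\int_0^\eps\eta_\eps=1$ and $\mathrm{supp}\,\eta_\eps\subset(0,\eps)$, the monotonicity argument gives $\psi_\eps\ge\phi$ as in ii). Because $\eta_\eps$ is smooth, I would differentiate \eqref{psieps} under the integral sign (equivalently, read each term of $\psi_\eps$ as a convolution of $\phi^\pm$ against the smooth kernel $\eta_\eps$), obtaining $(\psi_\eps)'(r)=\int_0^\eps\eta_\eps(s)(\phi^+)'(r+s)\,\de s-\int_0^\eps\eta_\eps(s)(\phi^-)'(r-s)\,\de s$; this expression is continuous in $r$ (a convolution of the continuous compactly supported $\eta_\eps$ with the bounded functions $(\phi^\pm)'$), so $\psi_\eps\in C^1$. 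For \eqref{stima3} I would bound $|(\psi_\eps)'|$ by $\|\eta_\eps\|_\infty=\eps^{-1}\|\eta\|_\infty$ times $\int_r^{r+\eps}(\phi^+)'+\int_{r-\eps}^r(\phi^-)'$, each of which is $\le\|\phi\|_X$. Finally \eqref{stima4} follows exactly as \eqref{stima2}, except that the shift $\eps$ is replaced by an $\eta_\eps$-weighted average over $s\in(0,\eps)$; carrying the weight through Tonelli produces the factor $\int_0^\eps s\,\eta_\eps(s)\,\de s=\eps\int_0^1 u\,\eta(u)\,\de u\le\eps$, which gives \eqref{stima4} with a constant $c$ depending only on $\eta$.

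The computations are all routine; the only points needing genuine care are the justification of differentiation under the integral when $\phi'$ has jumps — resolved by the smoothness of $\eta_\eps$ and the convolution viewpoint — and the bookkeeping of the constant extensions of $\phi^\pm$ to $r<0$, which must be kept consistent so that monotonicity persists across $r=0$ and the Tonelli domains in \eqref{stima2} and \eqref{stima4} are evaluated correctly near the origin. I expect this last piece of bookkeeping to be the main, though minor, obstacle.
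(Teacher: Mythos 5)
Your proposal is correct and follows essentially the same route as the paper's proof: identifying $(\phi^\pm)'$ as the positive and negative parts of $\phi'$, writing the increments $\phi^\pm(r\pm\eps s)-\phi^\pm(r)$ as integrals of these nonnegative derivatives, and swapping the order of integration to invoke \eqref{stima1}. The paper only sketches parts \emph{ii)} and \emph{iii)} and omits \emph{i)} as elementary, so your write-up is simply a more complete version of the same argument.
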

\begin{proof}
  The proof is elementary, we only describe how to get the bounds in 
  {\it ii)} and {\it iii)}.
Since $\phi = \phi^+-\phi^-$,
we rewrite the l.h.s. of \eqref{stima2} as 
$$
\begin{aligned}
  &\int_0^{+\infty}\bigl(\phi^+(r+\eps) - \phi^+(r)\bigr) +
 \bigl(\phi^-(r)- \phi^-(r-\eps)\bigr) \de r\\
  &=   \int_0^{+\infty} \left(\int_0^\eps \left( (\phi^+)'(r+\xi) +
    (\phi^-)'(r-\xi) \right) \de \xi \right) \de r \le 2\eps \|\phi\|_X.
\end{aligned}
$$ 
The estimate in \eqref{stima3} is immediate while, regarding \eqref{stima4},
we rewrite 
$\psi_\eps(r) - \phi(r)$ as  
$$
\begin{aligned}
	&\int_0^1 \eta(s)\bigl(
	\phi^+(r+\eps s) - \phi^+(r) + \phi^-(r) - \phi^-(r-\eps s)\bigr)\de s \\
	 &=\eps \int_0^1 s\eta(s) \left(\int_0^1 (\phi^+)'(r+\eps s\xi)\de \xi+\int_0^1 (\phi^-)'(r-\eps s\xi)\de \xi \right) \de s.
\end{aligned}
$$
We conclude by integrating in $r$, switching the order of integration and using \eqref{stima1}.
\end{proof}

Now we can prove the following proposition.
\begin{Proposition}{}
  \label{prop:dw1}
  Let $\rho $ and $\nu$ be two probability measures on $\R^d$
  with support in a ball $B_R$ and such that $\rho \in L^\infty(\R^d)$.
  Then
  $$\disc (\nu, \rho) \le C(\|\rho\|_{\infty}, R) \sqrt{\wass (\nu,\rho)},$$ 
  where $C$ is a constant that depends on the dimension $d$, as well as on $\|\rho\|_{\infty}$ and on $R$.
\end{Proposition}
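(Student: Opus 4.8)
The plan is to exploit the radial reformulation of $\disc$ established just above the statement, namely that
$$
\disc(\nu,\rho)=\sup_{\phi\in X,\ \|\phi\|_X\le 1}\ \sup_{x}\int\phi(|x-y|)\,(\de\nu(y)-\de\rho(y)),
$$
and to trade the roughness of the test function $y\mapsto\phi(|x-y|)$ (which is merely of bounded variation in the radial variable, hence far from Lipschitz) against the smoothness required by the dual definition of $\wass$. The device for this trade-off is exactly the regularization $\psi_\eps$ built in Lemma \ref{lemma:phipm}: it dominates $\phi$, it is $C^1$ with controlled Lipschitz constant $\|(\psi_\eps)'\|_\infty\le\frac{2}{\eps}\|\eta\|_\infty\|\phi\|_X$ by \eqref{stima3}, and it differs from $\phi$ by a small $L^1$ amount, $\int_0^{+\infty}|\psi_\eps-\phi|\le c\eps\|\phi\|_X$ by \eqref{stima4}. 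Fixing $\phi$ with $\|\phi\|_X\le 1$ and a centre $x$, I would regard $y\mapsto\psi_\eps(|x-y|)$ as a globally Lipschitz function on $\R^d$ with constant $\|(\psi_\eps)'\|_\infty$.

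Using $\psi_\eps\ge\phi$ together with the positivity of $\nu$, I would then estimate
\[
\int\phi(|x-y|)(\de\nu-\de\rho)\le\int\psi_\eps(|x-y|)(\de\nu-\de\rho)+\int\bigl(\psi_\eps-\phi\bigr)(|x-y|)\,\de\rho(y).
\]
The first term is a Lipschitz function integrated against $\nu-\rho$, hence bounded by $\|(\psi_\eps)'\|_\infty\,\wass(\nu,\rho)\le\frac{2\|\eta\|_\infty}{\eps}\wass(\nu,\rho)$ via the dual definition of the Wasserstein distance. For the second term I would use $\rho\le\|\rho\|_\infty\,\mathbf 1_{B_R}$ and pass to polar coordinates centred at $x$, bounding it by $\|\rho\|_\infty\int_0^{+\infty}(\psi_\eps-\phi)(t)\,\mathcal H^{d-1}\bigl(\partial B_t(x)\cap B_R\bigr)\,\de t$.

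The real point is the uniform geometric estimate $\mathcal H^{d-1}(\partial B_t(x)\cap B_R)\le c_d R^{d-1}$, valid for every $t>0$ and every $x\in\R^d$ with a purely dimensional constant $c_d$. For $t\le\sqrt2\,R$ this is immediate from $\mathcal H^{d-1}(\partial B_t(x))=d\omega_d t^{d-1}$; for $t>\sqrt2\,R$ it reflects the fact that a sphere of large radius is locally flat, so the portion of $\partial B_t(x)$ lying in $B_R$ is a graph of slope $\le 1$ over its tangent hyperplane (the half-angle of the cap is below $\pi/4$ because its chordal radius is $\le R<t/\sqrt2$) and therefore projects into a $(d-1)$-disk of radius $\le R$, whence area $\le\sqrt2\,\omega_{d-1}R^{d-1}$. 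I expect this sphere–ball intersection bound to be the main obstacle; everything else is bookkeeping. Granting it, the second term is controlled by $c_d\|\rho\|_\infty R^{d-1}\int_0^{+\infty}|\psi_\eps-\phi|\le c\,c_d\|\rho\|_\infty R^{d-1}\,\eps$.

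Combining the two bounds gives, uniformly in $\phi$ (with $\|\phi\|_X\le1$) and in $x$,
\[
\int\phi(|x-y|)(\de\nu-\de\rho)\le\frac{2\|\eta\|_\infty}{\eps}\,\wass(\nu,\rho)+c\,c_d\|\rho\|_\infty R^{d-1}\,\eps,
\]
so that taking the supremum yields $\disc(\nu,\rho)\le\frac{A}{\eps}\wass(\nu,\rho)+B\eps$ with $A=2\|\eta\|_\infty$ and $B=c\,c_d\|\rho\|_\infty R^{d-1}$. Optimizing over $\eps>0$ by the choice $\eps=\sqrt{A\,\wass(\nu,\rho)/B}$ produces $\disc(\nu,\rho)\le 2\sqrt{AB}\,\sqrt{\wass(\nu,\rho)}$, which is the claim with $C(\|\rho\|_{\infty},R)=2\sqrt{2c\,c_d\|\eta\|_\infty\|\rho\|_\infty R^{d-1}}$, a constant depending only on $d$, $\|\rho\|_\infty$ and $R$ (the mollifier norm $\|\eta\|_\infty$ being absolute). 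Should the resulting right-hand side exceed $1$, the inequality is trivial since $\disc\le1$ for probability measures, so the optimization may be carried out without constraints on $\eps$.
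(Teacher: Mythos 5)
Your proposal is correct and follows essentially the same route as the paper's proof: the same regularization $\psi_\eps$ from Lemma \ref{lemma:phipm}, the same decomposition into a Lipschitz term controlled by $\wass$ and a remainder controlled by $\|\rho\|_\infty$ via polar coordinates, and the same optimization in $\eps$. The only difference is that you spell out the uniform bound $\mathcal H^{d-1}(\partial B_t(x)\cap B_R)\le c_d R^{d-1}$, which the paper uses implicitly in \eqref{stimarho}; this is a welcome clarification rather than a deviation.
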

\begin{proof}
  Let $\phi$ be in $X$ and consider $\psi_\eps$ as in \eqref{psieps}.
  Fixed $x \in \R^d$, let $\Phi$ and $\Psi_\eps$ be the functions
  $$\Phi(y) \coloneq \phi(|x-y|)\ \ \text{and} \ \ \Psi_\eps(y) \coloneq \psi_\eps(|x-y|).$$
  Then, from {\it iii)} of Lemma \ref{lemma:phipm},
  $$\int \Phi \de \nu  - \int \Phi \de \rho \le
  \int \Psi_\eps \de \nu - \int \Phi \de \rho =
  \int \Psi_\eps \de(\nu - \rho ) + \int (\Psi_\eps -\Phi) \de\rho.$$
  From \eqref{stima3} of Lemma \ref{lemma:phipm},
  the first term is bounded
  by
  $\frac c{\eps} \|\phi\|_X \wass (\nu,\rho)$.
  Regarding the second term, denoting by $\sigma_r$ the uniform measure
  on $\pa B_r(x)$,  we have 
  \begin{equation}
  	\label{stimarho}
  \begin{aligned}
    \int (\Psi_\eps -\Phi) \de\rho
    &\le \| \rho\|_\infty \int_0^{+\infty}\de r
    \left(\psi_\eps(r)-\phi(r) \right)
    \int_{\pa B_r(x)} \fcr\{z\in B_R\} \sigma(\de z) 
    \\
  &\le c\eps R^{d-1} \|\phi\|_X \|\rho\|_{\infty},
  \end{aligned}
  \end{equation}
  where in the last inequality we have used \eqref{stima4}.
  Optimizing on $\eps$ and passing to the supremum in $\phi$,
  we get the proof.
\end{proof}

Note that if $\mu^N$ is an empirical measure and $\nu$ a probability measure
that does not give mass to the atoms of $\mu^N$,
$\disc(\mu^N,\rho) \ge 1/N$.
With this constraint, the discrepancy between two empirical measures
is ``small'' if the measures are close in the
sense specified in the following proposition.

\begin{Proposition}
  \label{prop:Dmunu}
  Let
  $$\mu^N = \frac 1N \sum_{i=1}^N \delta_{x_i}\ \ \text{and} \ \
  \nu^N =\frac 1N \sum_{i=1}^N \delta_{y_i}$$
  be two empirical measures on $\R^d$ and take $\delta>0$ such that $|x_i - y_i|\le \delta$ for all $i=1,\dots,N$. Then, for any probability measure $\rho \in L^{\infty}(\R^d)$
  supported on a ball $B_R$, 
  $$\disc (\mu^N,\nu^N) \le cR^{d-1}\delta\|\rho\|_{\infty} +
  c \disc (\mu^N,\rho).$$
\end{Proposition}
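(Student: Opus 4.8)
The plan is to bound, for each fixed centre $x\in\R^d$ and radius $r>0$, the quantity $\mu^N(B_r(x))-\nu^N(B_r(x))$ and then pass to the supremum in $x$ and $r$. Writing $\mu^N(B_r(x))=\frac1N\#\{i:\,|x_i-x|\le r\}$ and likewise for $\nu^N$, the index $i$ contributes to the difference only when exactly one of $x_i,y_i$ lies in $B_r(x)$. If $x_i\in B_r(x)$ and $y_i\notin B_r(x)$ then $|x_i-x|>|y_i-x|-\delta>r-\delta$, while if $y_i\in B_r(x)$ and $x_i\notin B_r(x)$ then $|x_i-x|\le|y_i-x|+\delta\le r+\delta$; in both cases $x_i$ lies in the half-open shell $A\coloneq B_{r+\delta}(x)\setminus B_{r-\delta}(x)=\{z:\,r-\delta<|z-x|\le r+\delta\}$. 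Consequently
\[\bigl|\mu^N(B_r(x))-\nu^N(B_r(x))\bigr|\le \mu^N(A).\]

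Next I would compare $\mu^N(A)$ with $\rho(A)$. Since $A$ is the set-difference of the two concentric closed balls $B_{r+\delta}(x)$ and $B_{r-\delta}(x)$, applying the definition of $\disc$ to each ball gives $|\mu^N(A)-\rho(A)|\le 2\,\disc(\mu^N,\rho)$. As $\rho$ is supported in $B_R$ and belongs to $L^\infty$, we have $\rho(A)\le\|\rho\|_\infty\,|A\cap B_R|$, where $|\cdot|$ denotes Lebesgue measure; thus everything reduces to bounding the volume of the thin shell $A$ restricted to the support ball.

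The core of the argument — and the step I expect to be the main obstacle — is the purely geometric estimate $|A\cap B_R|\le c\,\delta\,R^{d-1}$, uniform in $x$ and $r$. Passing to polar coordinates centred at $x$ gives
\[|A\cap B_R|=\int_{r-\delta}^{r+\delta}\mathcal H^{d-1}\!\bigl(\pa B_s(x)\cap B_R\bigr)\,\de s,\]
so it suffices to prove the uniform bound $\mathcal H^{d-1}(\pa B_s(x)\cap B_R)\le C_d\,R^{d-1}$ for every $s>0$ and every centre $x$ (the case $x=0$ being immediate, since then the intersection is a full sphere of area $d\omega_d s^{d-1}\le d\omega_d R^{d-1}$ or empty). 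For $x\neq0$ the set $\pa B_s(x)\cap B_R$ is a spherical cap, being cut from the sphere $\{x+su:\,|u|=1\}$ by a condition of the form $\langle x/|x|,u\rangle\le\tau$; since it is contained in $B_R$ its diameter is at most $2R$. For a cap of half-angle $\alpha\le\pi/2$ this forces $s\sin\alpha\le R$, and estimating its area $S_{d-2}\,s^{d-1}\int_0^\alpha\sin^{d-2}\phi\,\de\phi\le S_{d-2}\,(s\alpha)\,(s\sin\alpha)^{d-2}$ together with $\sin\alpha\ge\frac2\pi\alpha$ yields the bound $\frac\pi2 S_{d-2}R^{d-1}$; for $\alpha>\pi/2$ the cap contains a full equatorial circle, which forces $s\le R$, and its area is then controlled by the whole sphere area $d\omega_d s^{d-1}\le d\omega_d R^{d-1}$. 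Taking $C_d=\max(\frac\pi2 S_{d-2},d\omega_d)$ settles the claim, whence $|A\cap B_R|\le 2C_d\,\delta\,R^{d-1}$.

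Finally I would collect the estimates into $\mu^N(A)\le\|\rho\|_\infty|A\cap B_R|+2\disc(\mu^N,\rho)\le cR^{d-1}\delta\|\rho\|_\infty+c\,\disc(\mu^N,\rho)$ and take the supremum over $x$ and $r>0$. Two degenerate situations remain: when $r\le\delta$ the shell $A$ degenerates to a ball of radius at most $2\delta$, of volume $\le c\delta^d\le c\delta R^{d-1}$ provided $\delta\le R$, so the same bound holds; and when $\delta>R$ the asserted right-hand side already dominates $1\ge\disc(\mu^N,\nu^N)$, because a probability density on $B_R$ satisfies $\|\rho\|_\infty\ge1/(\omega_d R^d)$ and hence $cR^{d-1}\delta\|\rho\|_\infty\ge c\delta/(\omega_d R)\ge1$ for $c$ large enough. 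In all cases this gives $\disc(\mu^N,\nu^N)\le cR^{d-1}\delta\|\rho\|_\infty+c\,\disc(\mu^N,\rho)$.
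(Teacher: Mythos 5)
Your proof is correct, and it takes a genuinely more elementary route than the paper's. The paper works with the dual characterization of $\disc$ in terms of the class $X$: it decomposes a test function $\phi$ into monotone parts $\phi^{\pm}$, forms the shifted majorant $\phi_\delta$ of Lemma \ref{lemma:phipm}, uses $\Phi(y_i)\le \Phi_\delta(x_i)$ to pass from $\nu^N$ to $\mu^N$, and then splits $\int(\Phi_\delta-\Phi)\,\de\mu^N$ into a $\de(\mu^N-\rho)$ part (controlled by $\disc(\mu^N,\rho)$ since $\phi_\delta-\phi\in X$) and a $\de\rho$ part (controlled via \eqref{stima2} and the estimate \eqref{stimarho}). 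You instead stay with the ball definition of $\disc$: the observation that an index can contribute to $\mu^N(B_r(x))-\nu^N(B_r(x))$ only if $x_i$ falls in the shell $A=B_{r+\delta}(x)\setminus B_{r-\delta}(x)$ is the set-theoretic counterpart of the paper's $\delta$-shift of test functions, and writing $\mu^N(A)\le\rho(A)+2\disc(\mu^N,\rho)$ plays the role of the paper's splitting. What your approach buys is that it bypasses the whole regularization machinery ($\phi^\pm$, $\phi_\delta$, Lemma \ref{lemma:phipm}); what it costs is that you must prove explicitly the uniform bound $\mathcal H^{d-1}\bigl(\pa B_s(x)\cap B_R\bigr)\le C_d R^{d-1}$, which you do correctly via the spherical-cap computation — a geometric fact the paper also relies on in \eqref{stimarho} but leaves implicit. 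Your handling of the degenerate cases $r\le\delta$ and $\delta>R$ is careful and correct; the resulting constants differ immaterially from the paper's.
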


\begin{proof}
  Given $\phi\in X$ with $\|\phi\|_X\le 1$,
  we construct $\phi_\delta$ as in \eqref{phieps} and, fixed $x \in \R^d$,
  we consider $\Phi(y) \coloneq \phi(|x-y|)$, $\Phi_\delta(y) \coloneq
  \phi_\delta(|x-y|)$.

  Since $|x-x_i|-\delta \le |x - y_i| \le |x-x_i| + \delta$, we have that
  $$\Phi(y_i) = \phi^+(|x-y_i|) - \phi^-(|x-y_i|) \le \Phi_\delta(x_i).$$
  Then 
  $$\int \Phi \de (\nu^N-\mu^N) \le \int (\Phi_\delta - \Phi)\de \mu^N =
  \int (\Phi_\delta - \Phi)\de(\mu^N -\rho) + \int (\Phi_\delta - \Phi)\de\rho.$$
  Since $(\phi_\delta - \phi) \in X$, the first term is bounded by
  $c\disc (\mu^N,\rho)$. Using \eqref{stima2} and reasoning as in \eqref{stimarho} we estimate the second term with
  $c\delta R^{d-1}\|\rho\|_{\infty}.$
\end{proof}

\section{Agent dynamics}
\label{sez:agenti}

One of the difficulties in handling \eqref{eq:particelle} is that
the dynamic is not continuous w.r.t the initial datum.
For instance, consider three agents $\{X_i\}_{i=1}^3$ on a line, such that
\begin{equation}
	\label{eq:datoini}
	\begin{array}{lll}
		X_1(0) = -1,  &X_2(0) = \eps,  &X_3(0) = 1, \\ 
		\,V_1(0)=-1, &\,V_2(0) =0, &\,V_3(0) = 1,
	\end{array}
\end{equation}
with $\eps \in (-1,1)\setminus\{0\}$.
Then $p_{i,j}=M(X_i,|X_i - X_j|)$ takes the values $1/3,2/3, 1$.
Suppose for simplicity that $K(2/3)=3$ and $K(1)=0$, then
the equations for $V_1$ and $V_3$ read as
$$
\left\{\begin{aligned}
  &\dot V_1(t)  = V_2(t) - V_1(t)\\
  &\dot V_3(t)  = V_2(t) - V_3(t),
\end{aligned}
\right.
$$
while
$$
\dot V_2(t) = 
 \begin{cases}
   V_3(t) - V_2(t) & \text{ if } \eps \in (0,1)\\
   V_1(t) - V_2(t) & \text{ if } \eps \in (-1,0).
  \end{cases}
$$
It follows that 
$$
\left\{
\begin{aligned}
  &V_1(t) = -(1+\e^{-2t})/2\\
  &V_2(t) = -(1-\e^{-2t})/2\\
  &V_3(t) = (-1+4\e^{-t} - \e^{-2t})/2\\
\end{aligned}
\right.
$$
if $\eps \in (-1,0)$,
while
$$
\left\{
\begin{aligned}
  &V_1(t) = -(-1+4\e^{-t} - \e^{-2t})/2\\
  &V_2(t) = (1-\e^{-2t})/2\\
  &V_3(t) = (1+\e^{-2t})/2\\
\end{aligned}
\right.
$$
if $\eps \in (0,1)$, so that $\{X_i(t),V_i(t)\}_{i=1}^3$ is discontinuous in $\eps=0$.
Note that the discontinuity of the trajectories in the phase space is
easily translated in the weak discontinuity of the empirical measure
at time $t$, 
w.r.t the initial measure.


This discontinuity reflects the fact that,
for data as in \eqref{eq:datoini} with $\eps=0$, 
there is not a unique way to define
the dynamics.
Nevertheless, we can prove that the system \eqref{eq:particelle}
is well-posed for almost all initial data.
To do so, let us define some subsets of the phase space 
$$\left\{(X,V) \coloneq (x_1,\dots, x_N, v_1,\dots, v_N)\in
  \R^{Nd}\times \R^{Nd}\right\},$$
where $d\ge 1$ is the dimension of the configuration space of the
agents.

\begin{Definition}{\phantom{a}}
  \begin{itemize}
  \item[$\Cal R$] is 
    the set of ``the regular points'', i.e. 
    the set of points $(X,V)$ such that for each triad of
    different indices it holds that
    $|x_i-x_k| \neq |x_j-x_k|$.
  \item[$\Cal S$] is the ``iso-rank'' manifold, i.e. 
    the set of points $(X,V)$ such that
    there exists a triad of different
    indices $i, j, k$ for which 
    $|x_i-x_k| =|x_j-x_k|$, i.e. the agents $i$ and $j$ have the same
    rank with respect to the agent $k$.
  \item[$\Cal S_r$] is the set of the ``regular points''
    of the iso-rank manifold,
    i.e.. the subset of points
    $(X,V)\in \Cal S$ such that if 
    $|x_i-x_k| =|x_j-x_k|$ then $x_i$, $x_j$, $x_k$ are different
    and 
    $(v_i-v_k) \cdot \hat n_{ik} \neq (v_j-v_k) \cdot \hat n_{jk}$,
    where $\hat n_{ab} \coloneq (x_a-x_b)/|x_a-x_b|$.
  \end{itemize}
\end{Definition}
We can define the dynamics locally in time,
not only for initial data in  $\Cal R$, but also in $\Cal S_r$.
Namely, if initially
the agents $i$ and $j$ have the same rank with respect to the
agent $k$,
we can redefine the force exerted on the agent $k$ accordingly
to the velocities:
if $(v_i-v_k) \cdot \hat n_{ik} > (v_j-v_k) \cdot \hat n_{jk}$
we evaluate the rank as if $|x_i -x_k| > |x_j-x_k|$ for $t>0$
and as if $|x_i -x_k| < |x_j-x_k|$ for $t<0$.
In other words, the different speeds of change of the distances
among the agents allow the dynamics to leave $\Cal S$ instantaneously.

We discuss the existence of the dynamics, so redefined.
\begin{lemma}{}
  \label{lemma:esistenza-loc}
  If $(X,V) \in \Cal R \cup \Cal S_r$, there exists $\tau > 0$
  such that the system \eqref{eq:particelle} has a
  unique solution for $t\in (-\tau,\tau)$, with initial
  datum $(X,V)$. Moreover the solution is locally Lipschitz in $t$
  and in $(X,V)$.
\end{lemma}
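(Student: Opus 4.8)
The plan is to read \eqref{eq:particelle} as a \emph{piecewise} constant-coefficient linear system and to reduce the statement to classical Cauchy--Lipschitz theory inside each ``chamber''. First I observe that each weight $p_{ij}=K\bigl(M[S\mu^N](X_i,|X_i-X_j|)\bigr)$ depends on the configuration only through the integer
\[
N\,M[S\mu^N](X_i,|X_i-X_j|)=\#\{k:\ |X_k-X_i|\le |X_j-X_i|\},
\]
that is, only through the \emph{ordering} of the distances $\{|X_k-X_i|\}_k$ around each centre $i$. Such an ordering can change only when two of these distances coincide, i.e. only when the configuration meets the iso-rank manifold $\Cal S$. Consequently the map $(X,V)\mapsto (p_{ij})_{i,j}$ is locally constant on the open set $\Cal R=(\R^{Nd}\times\R^{Nd})\setminus\Cal S$, and on each connected component of $\Cal R$ the system \eqref{eq:particelle} reads
\begin{equation*}
  \dot X_i=V_i,\qquad \dot V_i=\frac1N\sum_{j}p_{ij}\,(V_j-V_i),
\end{equation*}
with \emph{constant} coefficients $p_{ij}$; this is a linear, hence globally Lipschitz, field in $(X,V)$.

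If $(X,V)\in\Cal R$ the conclusion is then immediate: being $\Cal R$ open, a whole neighbourhood of $(X,V)$ lies in the same component, the coefficients $p_{ij}$ are frozen there, and Cauchy--Lipschitz gives a unique solution which is smooth (in particular locally Lipschitz) in $t$ and, the frozen linear field being common to all nearby data, depends Lipschitz-continuously on $(X,V)$. Since at $t=0$ all the defining inequalities $|x_i-x_k|\neq|x_j-x_k|$ are strict and the distances evolve continuously, they remain strict for $|t|<\tau$ with $\tau$ small; hence the solution stays in the same chamber and the constant-coefficient description is self-consistent on $(-\tau,\tau)$.

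Now suppose $(X,V)\in\Cal S_r$. The bridge to the previous case is the identity $\frac{\rmd}{\rmd t}d_{ab}=(V_a-V_b)\cdot\hat n_{ab}$, where $d_{ab}(t):=|X_a(t)-X_b(t)|$: whenever $|x_i-x_k|=|x_j-x_k|$, the regularity condition $(v_i-v_k)\cdot\hat n_{ik}\neq(v_j-v_k)\cdot\hat n_{jk}$ says precisely that the two tied distances separate with distinct speeds, so the tie breaks in a definite direction for $t>0$ and in the opposite one for $t<0$. Committing to these directions freezes coefficients $p_{ij}^+$ (for $t>0$) and $p_{ij}^-$ (for $t<0$), exactly as prescribed by the redefinition of the force; I then solve the two resulting constant-coefficient linear systems on $(0,\tau)$ and $(-\tau,0)$ and glue them at $t=0$, where both take the value $(X,V)$. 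The construction is self-consistent for small $\tau$, because the ties present at $t=0$ break in the predicted way: $d_{ik}(t)-d_{jk}(t)=\bigl((v_i-v_k)\cdot\hat n_{ik}-(v_j-v_k)\cdot\hat n_{jk}\bigr)\,t+o(t)$ has the correct sign, while distances strictly ordered at $t=0$ keep their order by continuity, and the distinctness of $x_i,x_j,x_k$ keeps the unit vectors $\hat n$ well defined. Uniqueness follows because any solution has continuous velocities (the force is bounded), so the same first-order computation forces the tied distances of \emph{any} solution to split transversally, after which it solves the frozen linear system and is therefore unique.

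The delicate point, and the one I expect to be the main obstacle, is the Lipschitz dependence on $(X,V)$ at points of $\Cal S_r$, where a nearby datum may lie in the ``wrong'' chamber. Here I would argue as for a transversal crossing of a discontinuity hypersurface: if the datum is perturbed by $\delta$ into a chamber whose ordering disagrees with the transversal splitting, then by transversality the mismatched distances recross $\Cal S$ after a time $t^\ast=O(\delta)$, so the perturbed trajectory is governed by the ``wrong'' frozen field only for a time $O(\delta)$ before entering the correct chamber. Since on the relevant compact set the finitely many frozen fields differ by a bounded amount and each is Lipschitz, a Gr\"onwall comparison on $(-\tau,\tau)$ bounds the distance between the two trajectories by $c\,\delta$, giving the claimed local Lipschitz dependence; the same structure yields Lipschitz regularity in $t$, the velocities being continuous across the crossing and the field bounded.
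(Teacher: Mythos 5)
Your argument is correct and is exactly the one the paper has in mind: the lemma is stated with ``We omit the proof,'' immediately after the text sketches precisely your mechanism (the weights $p_{ij}$ are locally constant on each chamber of $\Cal R$, so the system is a frozen constant-coefficient linear ODE there, and on $\Cal S_r$ the sign of $(v_i-v_k)\cdot\hat n_{ik}-(v_j-v_k)\cdot\hat n_{jk}$ dictates a transversal tie-breaking that selects the chamber for $t>0$ and for $t<0$). Your treatment of the Lipschitz dependence on the datum at points of $\Cal S_r$ --- the perturbed trajectory spends only a time $O(\delta)$ in the ``wrong'' chamber before recrossing transversally, after which a Gr\"onwall comparison applies --- correctly supplies the one point the paper leaves entirely implicit.
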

\noindent
We omit the proof.

\vskip.3cm
In $\Cal R$ the solution is regular, so we can compute
the determinant of the  Jacobian of the flow $J(t)\equiv J(X,V,t)$.
It verifies the equation
\begin{equation}
\label{Jacobian}
\opde tJ(t)  = - \left( \frac dN \sum_{i,j: i\neq j} p_{ij} \right)J(t)  = -dN\gamma_N J(t),
\end{equation}
where
$$\gamma_N \coloneq \frac 1N \sum_{n=2}^{N} K\left(n/N \right).$$
Thus, volumes of the phase space are shrunk in time at a constant rate, therefore
their measure cannot vanish in finite time.
This implies the following fact,  of which we omit the proof. 
\begin{lemma}{}
  \label{lemma:misura0}
  The subset of initial data $(X,V) \in \Cal R$
  such that the trajectory,  at a first time in the future or in the past, intersects
  $\Cal S \setminus \Cal S_r$, has Lebesgue measure zero. Namely, $\Cal S \setminus \Cal S_r$ has dimension
  $2Nd - 2$.
\end{lemma}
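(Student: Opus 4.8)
The plan is to exploit the two facts established just above the statement. On $\Cal R$ the flow of \eqref{eq:particelle}, which I denote $\Phi_t(X,V)$, is smooth by Lemma \ref{lemma:esistenza-loc}, so it is locally Lipschitz in $(X,V)$; consequently, on every compact time interval it sends Lebesgue-null sets to Lebesgue-null sets. Moreover, by the Jacobian identity \eqref{Jacobian}, $J(t)=J(0)\,\e^{-dN\gamma_N t}$ never vanishes, so $\Phi_t$ is a well-defined local diffeomorphism on $\Cal R$ and can be continued (forward or backward) along any trajectory as long as it stays in $\Cal R$. It therefore suffices to show that the points of $\Cal S\setminus\Cal S_r$ reachable as a first crossing sweep out, under the flow, a set of dimension at most $2Nd-1$, hence a null set in $\R^{2Nd}$.

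First I would pin down the dimension of $\Cal S\setminus\Cal S_r$. The set $\Cal S$ is the finite union, over triads $(i,j,k)$ of distinct indices, of the hypersurfaces $\Sigma_{ijk}\coloneq\{|x_i-x_k|=|x_j-x_k|\}$; away from coincidences each $\Sigma_{ijk}$ is a smooth codimension-one submanifold, so $\dim\Cal S=2Nd-1$. A point of $\Cal S$ fails to lie in $\Cal S_r$ for exactly one of two reasons. Either two of $x_i,x_j,x_k$ coincide, which (since the iso-rank condition forces all three to coincide) imposes $2d$ scalar equations and confines the point to a set of codimension $2d\ge 2$; or the three points are distinct but $(v_i-v_k)\cdot\hat n_{ik}=(v_j-v_k)\cdot\hat n_{jk}$. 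This last is one further scalar equation on $\Sigma_{ijk}$, independent of the position equation because its gradient has nonzero velocity components (e.g.\ $\partial_{v_i}=\hat n_{ik}\neq0$), so it cuts $\Sigma_{ijk}$ in codimension one, giving codimension two overall. Taking the finite union over triads, $\Cal S\setminus\Cal S_r$ lies in a finite union of submanifolds of codimension at least two, whence $\dim(\Cal S\setminus\Cal S_r)\le 2Nd-2$, with equality on the velocity-degenerate stratum.

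The observation linking this velocity degeneracy to the dynamics is that, along a trajectory in $\Cal R$, one has $\tfrac{\rmd}{\rmd t}\bigl(|x_i-x_k|-|x_j-x_k|\bigr)=(v_i-v_k)\cdot\hat n_{ik}-(v_j-v_k)\cdot\hat n_{jk}$, so a first crossing of $\Sigma_{ijk}$ is transverse exactly when the landing point lies in $\Cal S_r$, and tangential exactly when it lies in the velocity-degenerate part of $\Cal S\setminus\Cal S_r$. I would then cover $\Cal S\setminus\Cal S_r$ by countably many relatively compact charts and, for each, consider the backward flow-out map $(q,s)\mapsto\Phi_{-s}(q)$, with $q$ in the chart and $s$ in a bounded interval, $q$ being approached from the $\Cal R$-side along the incoming trajectory. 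Its domain has dimension $(2Nd-2)+1=2Nd-1$, so its image, which contains every bad initial datum whose (future or past) first crossing lands in that chart, has dimension at most $2Nd-1$ and is null; a countable union of null sets is null.

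The main obstacle is precisely the meaning and regularity of this flow-out at $\Cal S\setminus\Cal S_r$, since the dynamics is not prescribed there and the incoming crossing may be tangential. I would resolve it by never evaluating $\Phi$ at the singular point itself: write $B=\bigcup_m B_m$, where $B_m$ is the set of bad data whose first crossing time satisfies $|\tau|\le m$ and whose trajectory up to crossing stays in $B_m(0)\subset\R^{Nd}$. Because the whole pre-crossing segment lies in $\Cal R$, Lemma \ref{lemma:esistenza-loc} and \eqref{Jacobian} give, on this compact set, uniform Lipschitz bounds for $\Phi_s$ at every $s$ strictly before the crossing; passing to the crossing point as a Lipschitz limit exhibits each $B_m$ as a Lipschitz image of the $(2Nd-2)$-dimensional set $\Cal S\setminus\Cal S_r$ fibered over the bounded time parameter. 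This keeps all estimates on the regular side and yields both the null-set conclusion and the stated dimension $2Nd-2$.
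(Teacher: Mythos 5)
The paper gives no proof of this lemma (it is explicitly omitted, with only the remark that the Jacobian identity \eqref{Jacobian} prevents phase-space volumes from collapsing), so your attempt is a reconstruction of the argument the authors are pointing to: bound the dimension of $\Cal S\setminus\Cal S_r$ by $2Nd-2$, sweep it along the flow over a one-dimensional time parameter to get a set of dimension at most $2Nd-1$, and use the non-degeneracy of the Jacobian. In outline this matches the paper's intent.

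There is, however, a genuine gap in your dimension count. You dispose of the coincidence stratum by asserting that ``the iso-rank condition forces all three to coincide'', but that implication only holds when the coinciding pair involves $x_k$. If $x_i=x_j\neq x_k$, the relation $|x_i-x_k|=|x_j-x_k|$ is automatic and imposes no further constraint, yet by the paper's definition such a point is \emph{not} in $\Cal S_r$ (the three positions must be pairwise distinct). Hence $\{x_i=x_j\}\cap\Cal S$ sits inside $\Cal S\setminus\Cal S_r$ and has codimension $d$, not $2d$. For $d\ge 2$ your conclusion survives since $d\ge 2$, but your justification for it is wrong; for $d=1$ (the setting of the example opening Section 3) this stratum has codimension one, the claimed dimension $2Nd-2$ fails on it, and your flow-out then covers a set of full dimension $2Nd$, so the proof collapses there. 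The repair is to note that a crossing with $x_i=x_j\neq x_k$ and $(v_i-v_j)\cdot\hat n_{ik}\neq 0$ is still transversal (the two distances swap order at nonzero speed, so the dynamics continues uniquely), which effectively enlarges $\Cal S_r$ and leaves only the codimension-two set $\{x_i=x_j\}$ intersected with a velocity degeneracy as genuinely singular. A secondary weakness: your map $(q,s)\mapsto\Phi_{-s}(q)$ is evaluated at points where the vector field is undefined, and distinct initial data could a priori funnel into the same crossing point, so exhibiting each $B_m$ as a single Lipschitz image is not automatic. The cleaner route, and the one the Jacobian remark suggests, is to cover the bad set by $\bigcup_{k\le T/\eps}\Phi_{-k\eps}(A_{c\eps})$ with $A_{c\eps}$ a $c\eps$-neighbourhood of the (compactified) codimension-two set, and combine $|\Phi_{-t}(U)|\le \e^{dN\gamma_N t}|U|$ with $|A_{c\eps}|=O(\eps^2)$ to make the total measure $O(\eps)$.
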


This lemma guarantees that, except for a subset of Lebesgue measure zero,
we can prolong the dynamics with initial data
in $\Cal R$
also after a crossing in $\Cal S$.
To define the dynamics for all times, we need to control
the number of crossings.
\begin{lemma}{}
  \label{lemma:misura0-bis}
  The subset of initial data $(X,V) \in \Cal R$
  such that the trajectory intersect 
  $\Cal S_r$ infinitely many times in finite time, has Lebesgue measure zero.
\end{lemma}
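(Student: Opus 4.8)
The plan is to show that, up to a null set, an accumulation of $\Cal S_r$-crossings in finite time is impossible, because it would force the trajectory to reach the strictly smaller set $\Cal S\setminus\Cal S_r$, whose dimension $2Nd-2$ was identified in Lemma~\ref{lemma:misura0}. Since there are finitely many triads of indices and any finite time span is covered by countably many intervals $[0,T]$, it suffices to bound, for a fixed horizon $T$, the data whose trajectory crosses $\Cal S_r$ infinitely often in $[0,T]$; by the pigeonhole principle one triad $(i,j,k)$ is then crossed infinitely often. Along a trajectory put $g(t)\coloneq|x_i(t)-x_k(t)|^2-|x_j(t)-x_k(t)|^2$. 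Positions are $C^1$ and velocities remain continuous through every crossing (only the acceleration jumps), so $g$ is $C^1$ with $g'(t)=2(x_i-x_k)\cdot(v_i-v_k)-2(x_j-x_k)\cdot(v_j-v_k)$ continuous; a crossing in $\Cal S_r$ is a zero of $g$ at which, by the transversality condition defining $\Cal S_r$, one has $g'\neq0$. Hence $\Cal S_r$-crossings of this triad are simple zeros of $g$.

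First I would prove that an accumulation of these simple zeros produces a point of $\Cal S\setminus\Cal S_r$. Let $t_*\le T$ be the first accumulation point of the full set of crossing times; on $[0,t_*)$ the crossings are locally finite, so by Lemma~\ref{lemma:esistenza-loc} the prolonged dynamics is well defined there and the trajectory $Z(t)=(X(t),V(t))$ extends continuously to $t_*$. By pigeonhole some triad $(i,j,k)$ has crossing times $t_n\uparrow t_*$; from $g(t_n)=0$ and continuity $g(t_*)=0$, and applying Rolle between consecutive zeros together with the continuity of $g'$ gives $g'(t_*)=0$. Therefore at $t_*$ the triad satisfies $|x_i-x_k|=|x_j-x_k|$ with $(v_i-v_k)\cdot\hat n_{ik}=(v_j-v_k)\cdot\hat n_{jk}$ (or a coincidence of positions), i.e.\ $Z(t_*)\in\Cal S\setminus\Cal S_r$. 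Writing $z_n\coloneq Z(t_n)\in\Cal S_r$, we have $z_n\to Z(t_*)\in\Cal S\setminus\Cal S_r$ and, since the forward flow is defined and invertible on $[0,t_n]$, the initial datum satisfies $w=\Phi_{-t_n}(z_n)$.

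It then remains to bound the set $A$ of such data. Fix $\eps>0$ and set $U_\eps\coloneq\{z\in\Cal S_r\cap\Omega_T:\ \mathrm{dist}(z,\Cal S\setminus\Cal S_r)<\eps\}$, where $\Omega_T$ is the compact region containing all trajectories on $[0,T]$ (velocities of \eqref{eq:particelle} stay in the convex hull of the initial ones, so positions grow at most linearly). For $n$ large $z_n\in U_\eps$, hence $w\in\bigcup_{t\in[0,T]}\Phi_{-t}(U_\eps)$. By \eqref{Jacobian} the flow has Jacobian determinant $\e^{-dN\gamma_N t}$, constant in the phase variables and bounded above and below on $[0,T]$; together with the bound on the velocities this makes the parametrization $(t,z)\mapsto\Phi_{-t}(z)$ Lipschitz, so by the area formula $\bigl|\bigcup_{t\in[0,T]}\Phi_{-t}(U_\eps)\bigr|\le C(T)\,\mathcal{H}^{2Nd-1}(U_\eps)$. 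Because $\Cal S\setminus\Cal S_r$ has dimension $2Nd-2$, it is $\mathcal{H}^{2Nd-1}$-negligible inside the hypersurface $\{g=0\}$, whence $\mathcal{H}^{2Nd-1}(U_\eps)\to0$ as $\eps\to0$. Thus $|A|=0$, and summing over triads and over $T\in\N$ concludes.

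The main obstacle is the rigorous swept-volume estimate, since the flow is only piecewise smooth: the parametrization $(t,z)\mapsto\Phi_{-t}(z)$ must be controlled uniformly through the (a priori arbitrarily many) crossings traversed backward from $z\in U_\eps$. The constant-rate contraction \eqref{Jacobian} pins down the full Jacobian determinant of $\Phi_{-t}$ independently of the number of crossings, but one still needs a uniform bound on its tangential Jacobian along $\{g=0\}$ and on the crossing speed; I would extract these from the local Lipschitz estimates of Lemma~\ref{lemma:esistenza-loc} and the compactness of $\Omega_T$, checking that the relevant Lipschitz constants do not degenerate as the number of crossings grows. A secondary point is the verification that $\mathcal{H}^{2Nd-1}(U_\eps)\to0$, which rests on the dimension count $2Nd-2$ of Lemma~\ref{lemma:misura0} and on the local finiteness of the area of $\{g=0\}$ in $\Omega_T$.
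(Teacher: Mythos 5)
Your proof follows essentially the same route as the paper's: reduce by pigeonhole to a single triad, show that an accumulation of transversal crossings forces the trajectory into $\Cal S\setminus\Cal S_r$ at the accumulation time (your Rolle argument on $g$ is the paper's observation that either a distance vanishes or the two radial velocity components converge to a common limit), and conclude by the dimension count $2Nd-1$ for the set of initial data whose trajectory reaches a $(2Nd-2)$-dimensional set. The swept-volume difficulty you flag at the end is genuine, but the paper does not address it either --- it simply asserts the dimension count in one line --- so your version is, if anything, more explicit about what that final step requires.
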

\begin{proof}
Fix $T>0$ and suppose to take $(X,V) \in \Cal R$ such that the solution $\left(X^N(t),V^N(t)\right)=(X_1(t), \dots, X_N(t), V_1(t), \dots, V_N(t))$ with initial data $(X,V)$ intersects $\Cal S_r$ a finite number of times
in $[0,T-\eps)$ and infinitely many times in $[0,T)$.
The number of particles is finite, so we can assume that
there exists a triad of indices such that
$|X_i-X_k|=|X_j-X_k|$ infinitely many times.
Since the velocities $V_i$ are bounded by a constant, as follows by
simple considerations  (see also Thm. \ref{teo:agenti}), 
from the equation we have that $|X_i-X_k|$ and
$|X_j-X_k|$ are $C^1$ functions, with time derivatives
uniformly Lipschitz, if $|X_i-X_k|$ and $|X_j-X_k|$ remain
far from $0$.
Then, as $t\to T$, either $|X_i-X_k|\to 0$ or
$(V_i-V_k) \cdot \hat n_{ik}$ and $(V_j-V_k) \cdot \hat n_{jk}$
converge to the same limit.
In both the cases, the trajectory reaches $\Cal S$
at a point that is not in $\Cal S_r$.
We conclude the proof observing that
the intial point with these properties lives in
a subset of dimension $2Nd-1$.
\end{proof}

From these lemmas and other few considerations, we obtain the following
theorem.
\begin{Theorem}{}
  \label{teo:agenti}
  Except for a set of measure zero, given $(X,V) \in \R^{Nd}\times \R^{Nd}$,
  there exists a unique global solution  
  $$\left(X^N(t,X,V),V^N(t,X,V)\right)
  \in C^1(\R^+,\R^{2dN})\times C(\R^+,\R^{2dN})$$
  with initial
  datum $(X,V)$.

  Moreover, given  $R_x>0$ and $R_v>0$,
  we have that
  $$|X_i(t)| \le R_x+tR_v, \ |V_i(t)| \le R_v$$
  for any $i$, if $|x_i|\le R_x$ and $|v_i|\le R_v$.
  Therefore $V_i(t,X,V)$  has Lipschitz constant
  bounded by $2R_v K(0)$.
\end{Theorem}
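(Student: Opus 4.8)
The plan is to build the global flow by patching together the local solutions of Lemma~\ref{lemma:esistenza-loc} across the finitely many regular crossings allowed by Lemmas~\ref{lemma:misura0} and~\ref{lemma:misura0-bis}, and then to obtain the quantitative bounds by elementary comparison on \eqref{eq:particelle}. First I would note that $\Cal S$ is a finite union of the hypersurfaces $\{|x_i-x_k|=|x_j-x_k|\}$, hence is Lebesgue-null, so that its complement $\Cal R$ carries full measure and it suffices to treat initial data there. For such data Lemma~\ref{lemma:esistenza-loc} yields a unique locally Lipschitz solution up to the first time it meets $\Cal S$; by Lemma~\ref{lemma:misura0}, outside a null set this first meeting occurs in the regular stratum $\Cal S_r$, where the velocity-based rule stated before that lemma lets the trajectory leave $\Cal S$ and re-enter $\Cal R$, so the local solution can be restarted. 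Let $T$ denote the partially defined map carrying a datum in $\Cal R$ to the configuration immediately after its first regular crossing. The main difficulty is to upgrade the single-crossing estimate of Lemma~\ref{lemma:misura0} into a statement valid at \emph{every} crossing. This I would handle by time-reversibility: since the dynamics on $\Cal S_r$ is defined symmetrically for $t<0$, Lemma~\ref{lemma:esistenza-loc} provides both $T$ and its inverse as locally Lipschitz maps, so that $T$ and $T^{-1}$ preserve Lebesgue-null sets. Writing $B$ for the null set where the first crossing is not regular, the data whose $n$-th crossing fails to be regular form the set $(T^{n-1})^{-1}(B)$, again null, and so is the countable union over $n$. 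Intersecting its complement with the full-measure set from Lemma~\ref{lemma:misura0-bis}, on which each bounded time interval contains only finitely many crossings, I obtain a full-measure set of initial data along which, on every interval $[0,\bar t]$, there are finitely many crossings, all regular; concatenating the local pieces across them produces the unique global solution. Between consecutive crossings the ranks, hence the weights $p_{ij}$, are constant, so $V^N$ solves a linear system and is smooth there, while at each crossing positions and velocities match and only $\dot V^N$ jumps; this gives $V^N\in C(\R^+,\R^{2dN})$ and $X^N\in C^1(\R^+,\R^{2dN})$.

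For the a priori bounds I would argue by a maximum principle that uses only $p_{ij}\ge 0$. Setting $m(t)=\max_i |V_i^N(t)|^2$ and differentiating at an index $i$ attaining the maximum, \eqref{eq:particelle} gives $\frac{d}{dt}|V_i^N|^2=\frac2N\sum_j p_{ij}\bigl(V_i^N\cdot V_j^N-|V_i^N|^2\bigr)\le 0$, because $V_i^N\cdot V_j^N\le |V_i^N|\,|V_j^N|\le |V_i^N|^2$ at such an index and $p_{ij}\ge 0$. Hence $m$ is non-increasing (reading the derivative in the upper Dini sense, since $m$ is a maximum of finitely many smooth functions), so $|V_i^N(t)|\le R_v$ whenever $|v_i|\le R_v$. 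The position bound is then immediate from $\dot X_i^N=V_i^N$, which yields $|X_i^N(t)|\le |x_i|+\int_0^t|V_i^N|\,\de s\le R_x+tR_v$.

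Finally, the Lipschitz-in-time bound on $V_i^N$ follows by combining these bounds with the monotonicity of $K$. Since the argument of $K$ is nonnegative and $K$ is decreasing, $p_{ij}=K\bigl(M(X_i,|X_i-X_j|)\bigr)\le K(0)$, while $|V_j^N-V_i^N|\le 2R_v$; thus \eqref{eq:particelle} gives $|\dot V_i^N|\le \frac1N\sum_j p_{ij}\,|V_j^N-V_i^N|\le 2R_v K(0)$, which is exactly the asserted bound on the Lipschitz constant.
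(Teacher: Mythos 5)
Your proposal is correct and follows essentially the same route as the paper's (very terse) proof: global well-posedness for almost every datum by concatenating the local solutions of Lemma~\ref{lemma:esistenza-loc} across the regular crossings controlled by Lemmas~\ref{lemma:misura0} and~\ref{lemma:misura0-bis}, the maximum-principle computation on $|V_i(t)|^2$ at a maximizing index for the velocity bound, and the pointwise estimate $|\dot V_i|\le 2R_vK(0)$ for the Lipschitz constant. The only point where you go beyond the paper is in making explicit the iteration over successive crossings via the null-set-preserving crossing map $T$ (the paper leaves this implicit, its natural substitute being the volume estimate \eqref{Jacobian}, which likewise shows that preimages of null sets under the flow remain null); this is a welcome filling-in of detail rather than a divergence.
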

\begin{proof}
	The proof follows easily from Lemma \eqref{lemma:esistenza-loc}, Lemma \eqref{lemma:misura0} and Lemma \eqref{lemma:misura0-bis}. 
	
	The {\it a-priori} bound on the support follows from \eqref{Jacobian} and by noticing that 
	$$\opde t |V_i(t)|^2 = - 2\sum_{j \neq i} p_{ij} \left( |V_i(t)|^2 - V_i(t) \cdot V_j(t)\right)$$
	is null or negative if $|V_i|^2$ is maximum in $i$.
	
\end{proof}

\section{The mean-field equation in $L^{\infty}$}
\label{sez:teo-linfinito}

In this section we show how to get an existence and uniqueness result 
for bounded weak solutions of equation \eqref{eq:principale}. 
We start by stating some elementary facts.
\begin{lemma}
\label{lemma:stimemasse}
Let  $\rho \in  L^{\infty}(\R^d)$ be a probability density.
\begin{enumerate}[i)]
\item Given $r_1, r_2>0$, 
\[
\left|M[\rho](x, r_1) - M[\rho](x,r_2)\right| \le c\|\rho\|_{\infty} \left|r_1^d-r_2^d\right|.
\]
\item Given $x_1, x_2 \in \R^d$ and $r>0$,
\[
\left|M[\rho](x_1, r)- M[\rho](x_2, r)\right| \le c \|\rho\|_{\infty} r^{d-1}|x_1-x_2|.
\]
%
\end{enumerate}
\end{lemma}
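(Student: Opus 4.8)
The plan is to observe that both estimates measure how the $\rho$-mass of a ball changes under a perturbation of either its radius or its center, and that since $\rho \le \|\rho\|_{\infty}$ pointwise, each such change is controlled by $\|\rho\|_{\infty}$ times the Lebesgue volume of the region that is gained or lost. Writing $\omega_d$ for the volume of the unit ball of $\R^d$, the whole argument thus reduces to two elementary volume estimates, and the constant $c$ in the statement will simply absorb $\omega_d$ and $d$.

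For part i), I would assume without loss of generality that $r_1 > r_2$ and write
$$M[\rho](x,r_1) - M[\rho](x,r_2) = \int_{r_2 < |x'-x| \le r_1} \rho(x')\, \de x',$$
the integral of $\rho$ over the annulus between the two radii. Bounding $\rho$ by $\|\rho\|_{\infty}$ and using that this annulus has volume $\omega_d\,(r_1^d - r_2^d)$ yields the claim at once.

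For part ii), set $\delta \coloneq |x_1 - x_2|$ and note that
$$\big|M[\rho](x_1,r) - M[\rho](x_2,r)\big| \le \|\rho\|_{\infty}\, \mathrm{vol}\big(B_r(x_1) \,\triangle\, B_r(x_2)\big),$$
so everything hinges on estimating the volume of the symmetric difference of two equal-radius balls whose centers lie at distance $\delta$. If $x' \in B_r(x_1) \setminus B_r(x_2)$ then $|x'-x_1| \le r$, while the triangle inequality $|x'-x_2| \le |x'-x_1| + \delta$ together with $|x'-x_2| > r$ forces $|x'-x_1| > r - \delta$; hence this set, and symmetrically $B_r(x_2) \setminus B_r(x_1)$, is contained in an annulus of radii $r-\delta$ and $r$. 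When $\delta < r$ each such annulus has volume $\omega_d\,(r^d - (r-\delta)^d) \le \omega_d\, d\, r^{d-1}\delta$, the last inequality being the mean value estimate applied to $t \mapsto t^d$ on $[r-\delta, r]$; when $\delta \ge r$ the symmetric difference is contained in the two balls, of total volume $2\omega_d r^d \le 2\omega_d r^{d-1}\delta$. In either regime $\mathrm{vol}\big(B_r(x_1)\,\triangle\,B_r(x_2)\big) \le c\, r^{d-1}\delta$, which gives the stated bound.

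The computation is genuinely elementary; the only point requiring a little care is the geometric confinement of the symmetric difference to a thin annulus in part ii), together with the separate treatment of the regime $\delta \ge r$, which is what guarantees that the bound is of order $r^{d-1}|x_1 - x_2|$ rather than of order $r^d$, uniformly in the radius.
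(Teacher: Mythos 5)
Your proposal is correct and follows essentially the same route as the paper: part i) is the immediate annulus-volume bound the paper omits, and for part ii) your symmetric-difference decomposition, the confinement of $B_r(x_1)\setminus B_r(x_2)$ to the annulus $r-\delta<|x'-x_1|\le r$ via the triangle inequality, the mean value estimate $r^d-(r-\delta)^d\le d\,r^{d-1}\delta$, and the separate treatment of the regime $\delta\ge r$ all match the paper's argument step for step.
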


\begin{proof}
  The proof of the first assertion is immediate.
  For the second, we use the following splitting
  $$
  \begin{aligned}
  \fcr\{|x_1-y|<r\}-\fcr\{|x_2-y|<r\}&=\fcr\{|x_1-y|<r\}\fcr\{|x_2-y|\ge r\} \\
  &- \fcr\{|x_2-y|<r\}\fcr\{|x_1-y|\ge r\}
  \end{aligned}
  $$
  and we note that, if $|x_1-x_2|\ge r$,
  $$\int_{|x_1-y|<r}\fcr\{|x_2-y |\ge r\} \de y \le c r^d \le c r^{d-1} |x_1 -x_2|,$$
  while, if $|x_1 - x_2| < r$,
  $$
  \begin{aligned}\int_{|x_1-y|<r}&\fcr\{|x_2-y |\ge r\}\de y \le
  \int \fcr \left\{r-|x_1-x_2| < |x_1 -y| < r\right\}\de y\\ &= c
  r^{d}\left(1 - \left(1-|x_1-x_2|/r\right)^d\right) \le cd r^{d-1} |x_1-x_2|.
  \end{aligned}$$
\end{proof}
In the following, we denote by $\mathcal{B}_r$ the closed ball of center $0$ and radius $r$ in $L^\infty(\R^d \times \R^d)$ and by 
$
C_w \left([0,+\infty); L^{\infty}(\R^d \times \R^d )\right)
$
the set of families of bounded probability densities $\{f_t\}_{t \ge 0}$ which are weakly continuous in time in the sense of measures. 

\begin{lemma}
  \label{lemma:flusso}  
  Let $\{f_t\}_{t \ge 0}$ be a family of probability densities
  such that $\{f_t\} \in C_w \left([0,+\infty); \mathcal{B}_{r(t)}\right)$, with $r(t)$ a continuous nondecreasing function.
  Suppose that 
  \begin{equation}
  	\label{support}
  		\text{supp}(f_t)\subset B_{R_x(t)}\times B_{R_v(t)}, 
  \end{equation}
where $R_v(t)$ and $R_x(t)$ are two
  continuous non decreasing functions.
  Then, for any initial datum $(x,v)\in \R^{d}\times \R^d$, 
  there exists a unique global solution of \eqref{eq:flusso}.
\end{lemma}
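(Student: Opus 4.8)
The plan is to read \eqref{eq:flusso} as a non-autonomous Cauchy problem $\dot z = b_t(z)$ for $z=(X,V)$, with field $b_t(x,v) = (v,\, W[Sf_t,f_t](x,v))$, and to apply the Cauchy--Lipschitz theorem once $b_t$ is shown to be continuous in $t$ and locally Lipschitz in $(x,v)$, uniformly for $t$ in compact intervals; global existence will then come from an a priori linear-in-time bound on the velocity. I note that here $\{f_t\}$ is \emph{given}, so this is pure ODE well-posedness for a prescribed time-dependent field, not a self-consistency problem.

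First I would record the elementary bounds that make the field tame on compacts. Since $\mathrm{supp}(f_t)\subset B_{R_x(t)}\times B_{R_v(t)}$ and $\|f_t\|_\infty\le r(t)$, the spatial marginal obeys $\|Sf_t\|_\infty\le c\,r(t)R_v(t)^d$ and is supported in $B_{R_x(t)}$, all bounded on any $[0,T]$ by continuity and monotonicity. For the Lipschitz character in $(x,v)$, I would estimate, for $y$ in the bounded support of $f_t$ and $x,x'$ in a fixed ball, the kernel increment $|K(M[Sf_t](x,|x-y|))-K(M[Sf_t](x',|x'-y|))|$. Using that $K$ is Lipschitz and splitting the argument of $M$ into a change of radius and a change of base point, Lemma \ref{lemma:stimemasse} gives a bound of the form $c\,\mathrm{Lip}(K)\,R^{d-1}\|Sf_t\|_\infty\,|x-x'|$ (the radial part controlled by $\bigl||x-y|^d-|x'-y|^d\bigr|\le dR^{d-1}|x-x'|$). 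Writing $W[Sf_t,f_t](x,v)=\int K(\cdots)\,w\,f_t-v\int K(\cdots)\,f_t$, the first integral is Lipschitz in $x$ by the kernel estimate, while the $v$-dependence is affine with coefficient $\int K(\cdots)\,f_t\in[0,K(0)]$; hence $b_t$ is Lipschitz in $(x,v)$ on compacts, with constant uniform on $[0,T]$.

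The delicate step, and the main obstacle, is the continuity of $t\mapsto W[Sf_t,f_t](x,v)$ for fixed $(x,v)$, precisely because the kernel itself depends on $t$ through $Sf_t$ inside $M$. I would write the increment at $t_0$ as
$$
\int\bigl[K(M[Sf_t](x,|x-y|))-K(M[Sf_{t_0}](x,|x-y|))\bigr](w-v)\,f_t\;+\;\int K(M[Sf_{t_0}](x,|x-y|))(w-v)\,(f_t-f_{t_0}).
$$
The second term vanishes as $t\to t_0$ by weak continuity of $f_t$, its integrand being a fixed continuous function (use that $M[Sf_{t_0}](x,\cdot)$ is continuous by Lemma \ref{lemma:stimemasse}), bounded after cutting off with the uniformly bounded supports. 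For the first term, weak convergence of $Sf_t$ together with $Sf_{t_0}\in L^\infty$ (so spheres are null sets) yields the pointwise convergence $M[Sf_t](x,r)\to M[Sf_{t_0}](x,r)$ for every $r$; since $r\mapsto M[Sf_t](x,r)$ is nondecreasing and the limit is continuous, a P\'olya-type argument upgrades this to \emph{uniform} convergence on $[0,R]$. Then uniform continuity of $K$ makes the bracket tend to $0$ uniformly in $y$, and the term vanishes since $f_t$ is a probability measure with $(w-v)$ bounded on its support. (One could instead quantify this through $\disc(Sf_t,Sf_{t_0})\to 0$ via Proposition \ref{prop:contM} and Proposition \ref{prop:dw1}, but the monotonicity argument keeps the proof self-contained, consistent with the remark that Proposition \ref{prop:dw1} is not really needed.)

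With $b_t$ continuous in $t$ and locally Lipschitz in $(x,v)$, Cauchy--Lipschitz gives a unique local solution of \eqref{eq:flusso}. To globalize, I would test the second equation against $V_t$: from $\dot V_t=\int K(\cdots)\,w\,f_t-\bigl(\int K(\cdots)\,f_t\bigr)V_t$ with the coefficient of $V_t$ nonnegative, one gets $\tfrac{d}{dt}|V_t|\le K(0)R_v(t)$, whence $|V_t|\le|v|+K(0)\int_0^t R_v(s)\,\de s$ on $[0,T]$, and then $|X_t|$ is likewise bounded because $\dot X_t=V_t$. The absence of finite-time blow-up allows the local solution to be continued for all $t\ge0$, giving the asserted global existence and uniqueness.
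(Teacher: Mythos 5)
Your proposal is correct, and its skeleton (reduce to Cauchy--Lipschitz for the non-autonomous field $b_t(x,v)=(v,W[Sf_t,f_t](x,v))$ after checking boundedness on compacts, local Lipschitz continuity in $(x,v)$ via Lemma \ref{lemma:stimemasse}, and continuity in $t$, then globalize by the linear growth of the field) is exactly the paper's. The one genuine divergence is the continuity-in-$t$ step. The paper makes it quantitative: it notes $\wass(Sf_t,Sf_s)\le\wass(f_t,f_s)$ and invokes Proposition \ref{prop:contM} together with Proposition \ref{prop:dw1} to get $|M[Sf_t]-M[Sf_s]|\le\disc(Sf_t,Sf_s)\le C\sqrt{\wass(f_t,f_s)}$, hence a modulus of continuity for the kernel; the second piece of the increment is then handled, as you do, by the Lipschitz character of the frozen kernel in $y$ and weak continuity of $f_t$. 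Your replacement for the first piece --- portmanteau (spheres are $Sf_{t_0}$-null since $Sf_{t_0}\in L^\infty$) giving pointwise convergence of $M[Sf_t](x,\cdot)$, upgraded to uniform convergence on $[0,R]$ by monotonicity in $r$ and continuity of the limit --- is valid and more elementary, at the price of being purely qualitative where the paper gets an explicit $\sqrt{\wass}$ rate (which it reuses later in Proposition \ref{lemma:munu}). One small misattribution: the property the paper declares ``not really necessary'' is the \emph{converse} bound proved in the appendix (continuity of $\wass$ with respect to $\disc$), not Proposition \ref{prop:dw1}, which the paper does rely on both here and in the proof of the mean-field limit; this does not affect the correctness of your argument.
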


\begin{proof}
  From the classical Cauchy-Lipschitz theory, we only have to verify that $W[Sf_t,f_t](x,v)$ is bounded
  on compact sets,  locally Lipschitz
  and continuous in $t$.

  Recalling \ref{eq:interazione}, the boundness on compact sets follows from
  $$|W[Sf_t,f_t](x,v)| \le \|K\|_{\infty} \left(R_v(t)+ |v|\right).$$
  Since from {\it i)} and {\it ii)} of Lemma \ref{lemma:stimemasse}
  $$
  \begin{aligned}
   |M[Sf_t](x_1,|x_1-y|) &- M[Sf_t](x_2,|x_2-y|)| \\
    &\le c\|Sf_t\|_{\infty}( |x_1| + |x_2|+ |y|)^{d-1} |x_1-x_2|
  \end{aligned},$$
  we have that, if $(x_1,v_1)$ and $(x_2, v_2)$ belong to a compact subset of
  $\R^d\times \R^d$,
  $$|W[Sf_t,f_t](x_1,v_1) - W[Sf_t,f_t](x_2,v_2)|
  \le C (|x_1-x_2|+|v_1 -v_2|),$$
  where $C$ depends on $R_x, R_v$ and on the diameter of the compact set.

  In order to prove that $W[Sf_t,f_t](x,v)$ is continuous in $t$, we first observe that
  $\wass (Sf_t,Sf_s) \le \wass (f_t,f_s)$
  and that, from the Lipschitzianity of $K$ and Propositions \ref{prop:contM} and
  \ref{prop:dw1},
  $K\left(M([Sf_t](x,|x-y|)\right)$ is continuous in $t$. Since $K(M([Sf_t](x,|x-y|))$
  is Lipschitz in $y$, also
  $$\int K\left(M([Sf_t](x,|x-y|)\right) (v-w) \left(f_t(y,w) -f_s(y,w)\right)\de y \de w$$
  vanishes when $\wass (f_t,f_s)\to 0$.
\end{proof}

Now we can prove the main theorem of this section.
\begin{Theorem}
  \label{teo:emfl}
  Let $f_0(x,v)\in L^\infty(\R^d \times \R^d)$ be a
  probability density such that $\text{supp}(f_0)
  \subset B_{R_x} \times B_{R_v}$. Given $T>0$,
  there exists a unique weak solution
  $f\in C_w\left([0,T];L^{\infty}(\R^d\times \R^d)\right)$
  of the topological Cucker-Smale equation. 
  Moreover 
    \begin{equation}
    \label{eq:supporto}
    \text{supp}(f_t)\subset B_{R_x+tR_v}\times B_{R_v}.
    \end{equation}
\end{Theorem}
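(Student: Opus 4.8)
The plan is to realise $f_t$ as the unique fixed point of the push-forward map associated with the self-consistent flow, working in a complete metric space on which the a priori bounds required by Lemma~\ref{lemma:flusso} hold automatically. Fix $T>0$ and set $r(t)\coloneq\|f_0\|_\infty\,\e^{dK(0)t}$. Let $\Cal K_T$ be the family of $\{g_t\}_{t\in[0,T]}\in C_w([0,T];L^\infty)$ of probability densities with $g_0=f_0$, $\|g_t\|_\infty\le r(t)$ and $\text{supp}(g_t)\subset B_{R_x+tR_v}\times B_{R_v}$, metrised by $\sup_{t\le T}\wass(g_t,h_t)$. The $L^\infty$-bound and the support constraints are preserved under weak limits, hence under $\wass$-limits, so $\Cal K_T$ is complete. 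Given $g\in\Cal K_T$, the hypotheses of Lemma~\ref{lemma:flusso} are met with the continuous nondecreasing functions $r(t)$, $R_x+tR_v$ and $R_v$, so the flow $\Phi_t$ generated by $W[Sg_t,g_t]$ is globally defined, and I set $\Gamma(g)_t\coloneq\Phi_t\#f_0$.

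First I would check that $\Gamma$ maps $\Cal K_T$ into itself. Since $W[Sg_t,g_t](x,v)$ is affine in $v$, one computes $\nabla_v\cdot W[Sg_t,g_t](x,v)=-d\int K\bigl(M[Sg_t](x,|x-y|)\bigr)Sg_t(y)\,\de y\in[-dK(0),0]$, so the Jacobian determinant of the phase-space flow stays in $[\e^{-dK(0)t},1]$ and the push-forward density satisfies $\|\Gamma(g)_t\|_\infty\le\|f_0\|_\infty\,\e^{dK(0)t}=r(t)$ (the transport of the density being made rigorous by mollifying $g$ and passing to the limit). The velocity support is controlled by the maximum principle already used in Theorem~\ref{teo:agenti}: if $|V_t|$ reaches $R_v$ while $\text{supp}(g_t)\subset\R^d\times B_{R_v}$, then $\frac{\de}{\de t}|V_t|^2=2\int K(\cdot)\bigl(V_t\cdot w-|V_t|^2\bigr)g_t\le 0$, so $\text{supp}(\Gamma(g)_t)\subset B_{R_x+tR_v}\times B_{R_v}$; weak continuity in $t$ follows from continuity of the flow. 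The same bounds show that \emph{any} bounded weak solution of \eqref{eq:principale} automatically lies in $\Cal K_T$, which reduces the global uniqueness claim to uniqueness inside $\Cal K_T$ and yields \eqref{eq:supporto}.

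The core is the contractivity of $\Gamma$. Given $g^1,g^2\in\Cal K_T$, I would couple the two flows synchronously through the common datum $f_0$ and set $D(t)\coloneq\int\bigl(|X^1_t-X^2_t|+|V^1_t-V^2_t|\bigr)\de f_0\ge\wass(\Gamma(g^1)_t,\Gamma(g^2)_t)$. Differentiating and splitting $W[Sg^1_t,g^1_t](X^1_t,V^1_t)-W[Sg^2_t,g^2_t](X^2_t,V^2_t)$ as (i) the variation of the single field $W[Sg^1_t,g^1_t]$ between the two phase points, bounded by $C(|X^1_t-X^2_t|+|V^1_t-V^2_t|)$ via the Lipschitz estimate of Lemma~\ref{lemma:flusso}, plus (ii) the difference of the two fields at a fixed point, reduces the problem to $\int|V^1_t-V^2_t|\de f_0$ and two contributions of (ii). The contribution in which $g^1,g^2$ act as integrating measures is $\le C\wass(g^1_t,g^2_t)$, since $K\bigl(M[Sg^2_t](x,|x-y|)\bigr)(w-v)$ is Lipschitz in $(y,w)$ on the compact support.

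The hard part will be the remaining, genuinely topological contribution $\int\bigl[K(M[Sg^1_t](x,|x-y|))-K(M[Sg^2_t](x,|x-y|))\bigr](w-v)\,g^1_t(y,w)\,\de y\,\de w$: the naive route through Propositions~\ref{prop:contM} and~\ref{prop:dw1} only gives $\sqrt{\wass}$, too weak to close a Grönwall inequality. The key observation is that the field \emph{integrates in $y$}, so the relevant quantity is not $\sup_r|M[Sg^1_t]-M[Sg^2_t]|$ but the radial integral $\int_0^{+\infty}|M[Sg^1_t](x,r)-M[Sg^2_t](x,r)|\,\de r$. Writing $\mu^i_x$ for the push-forward of $Sg^i_t$ under the $1$-Lipschitz map $x'\mapsto|x'-x|$, this integral is exactly the one-dimensional Wasserstein distance $\wass(\mu^1_x,\mu^2_x)\le\wass(Sg^1_t,Sg^2_t)\le\wass(g^1_t,g^2_t)$; using $\|g^1_t\|_\infty\le r(t)$, the Lipschitzianity of $K$ and the compact support, this contribution is bounded \emph{linearly} by $CR_x(t)^{d-1}\wass(g^1_t,g^2_t)$. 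Collecting the estimates gives $\dot D(t)\le C\,D(t)+C\wass(g^1_t,g^2_t)$, hence by Grönwall $\wass(\Gamma(g^1)_t,\Gamma(g^2)_t)\le C_T\int_0^t\wass(g^1_s,g^2_s)\,\de s$. Iterating this Volterra bound makes some power of $\Gamma$ a contraction, so Banach's theorem provides a unique fixed point $f\in\Cal K_T$, which by construction is the push-forward of $f_0$ along the flow of its own field, i.e. the desired weak solution; applying the same estimate to two bounded weak solutions directly yields $\wass(f^1_t,f^2_t)\le C_T\int_0^t\wass(f^1_s,f^2_s)\,\de s$ and hence uniqueness.
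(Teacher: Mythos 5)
Your proposal is correct and follows the same architecture as the paper's proof: realise $f_t$ as the unique fixed point of the map sending $\{g_t\}$ to the push-forward of $f_0$ along the flow generated by $W[Sg_t,g_t]$, on a class of bounded, compactly supported, weakly continuous families, with the $L^\infty$ bound propagated through the Liouville equation for the Jacobian and the velocity support controlled by the same maximum principle as in Theorem~\ref{teo:agenti}. The genuine difference lies in the contraction step, which the paper dispatches as ``a standard construction'' without details. You rightly point out that the route suggested by the paper's own toolbox --- Proposition~\ref{prop:contM} followed by Proposition~\ref{prop:dw1} --- only controls the topological term by $\sqrt{\wass(g^1_t,g^2_t)}$, which cannot close a contraction estimate as the distance tends to zero; your fix, replacing the sup-in-$r$ discrepancy by the radial integral $\int_0^{+\infty}\bigl|M[Sg^1_t](x,r)-M[Sg^2_t](x,r)\bigr|\de r$ and recognising it as the one-dimensional Wasserstein distance between the push-forwards of $Sg^i_t$ under the $1$-Lipschitz map $y\mapsto|y-x|$, yields the bound \emph{linear} in $\wass(g^1_t,g^2_t)$ that actually makes the scheme work (the boundedness of the integrating density being what converts the $y$-integral into a weighted $r$-integral). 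On this one delicate point your write-up is more complete than the paper's. Two minor remarks: your growth rate $\e^{dK(0)t}$ for $\|f_t\|_\infty$ is a valid but coarser bound than the paper's exact $\e^{d\gamma t}$, which follows from the identity $\int K\bigl(M[\rho](x,|x-y|)\bigr)\rho(y)\de y=\int_0^1K(m)\de m=\gamma$ for an atomless $\rho$; and iterating the Volterra inequality so that a power of the map is a contraction lets you obtain the solution on all of $[0,T]$ at once, avoiding the paper's local-existence-plus-continuation step.
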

\begin{proof}
  We first note that, if the solution exists, 
  \eqref{eq:supporto}  follows
  from an argument similar to the one used 
  in the discrete case (see Theorem \ref{teo:agenti}).

  We now prove the existence.
  As in Lemma \ref{lemma:flusso}, consider a family of probability densities $\{g_t\}_{t \ge 0}
  \in 
  C_w\left([0,T];\mathcal{B}_{M}\right),
  $
  with $M\coloneq \|f_0\|_{\infty} \e^{d\gamma T}$ and such that \eqref{support}
  holds with $R_x(t) = R_x + t R_v$ and $R_v(t)=R_v$.
  The push-forward of $f_0$ along the flow generated
  by $g_t$, denoted by $\tilde g_t$, is weakly continuous
  in $t$, uniformly in $g_t$, with $t\in [0,T]$.
  Moreover, the determinant 
  of the Jacobian of the flow $J(t)=J(t,x,v)$
  verifies
  $$\opde tJ(t)  = -J(t) d \gamma.$$
  So the push-forward $\tilde g_t$
  is bounded by $\|f_0\|_{\infty} \e^{d \gamma t}$.

  With a standard construction we can prove that,
  for $T$ sufficiently small,
  the map $  \{g_t\}\mapsto \{\tilde g_t\}$  is a contraction in $C_w\left([0,T];\mathcal{B}_{M}\right)$,
  with the distance defined by the supremum on time of the Wasserstein distance;
  in this way we prove
  local existence and uniqueness. Using
  the a-priori estimate on the supremum and on the support,
  we get the global result.
\end{proof}

\section{The mean-field limit}
\label{sez:limite}

In this section we prove the main result regarding the mean-field
limit for the topological Cucker-Smale equation.
In the sequel,
$f_t$ is the fixed global solution of
eq. \eqref{eq:flusso} as in Theorem \ref{teo:emfl}, with initial
datum $f_0$, 
and $\mu^N_t$ is the  global solution 
of eq. \eqref{eq:particelle}
in the sense of Theorem \ref{teo:agenti},
with initial datum 
$$\mu_0^N = \frac 1N \sum_{i=0}^N \delta_{x_i} \delta_{v_i}.$$
We assume that $f_0$ and $\mu^N_0$ are supported in $B_{R_x}\times B_{R_v}$.
Fixed $T$,
we indicate
by $C(T)$ any constant that
depends only on $T$, $R_x$, $R_v$  and $\|f_0\|_{\infty}$.

In order to get the result, 
we compare the $N$-agent dynamics 
with the ``intermediate'' dynamics  given by 
\begin{equation}
  \label{eq:flussoeffe}
  \begin{cases}
    \dot X^f_i(t)=V^f_i(t) \\
    \dot V^f_i(t)= W[Sf_t,\nu^N_t](X^f_i,V_i^f),
  \end{cases}
\end{equation}
where
$$\nu^N_t \coloneq \frac 1N \sum_{k=1}^N \delta_{X^f_k(t)} \, 
\delta_{V^f_k(t)}$$
is the empirical measure. The initial datum 
is $\nu^N_0 = \mu^N_0$,
i.e. 
$$\{(X^f_i(0),V^f_i(0))\}_{i=1}^N = \{(x_i,v_i)\}_{i=1}^N.$$

\begin{Proposition}
  \label{lemma:munu}
  Given $T>0$, it holds that
  \begin{enumerate}[i)]
  \item  For $t \in [0, T]$, 
  \begin{equation}
  	\label{distanzaf}
  	  \wass (f_t,\nu_t^N) \le C(T) \wass(f_0,\mu^N_0).
  \end{equation}

  \item For $t \in [0, T]$, the distance 
    \[
      \delta(t) 
      \coloneq \max_{i=1, \dots, N}  \left(
        |X^f_i(t)-X^N_i(t)|+|V^f_i(t) - V^N_i(t)|\right)
    \]
    verifies
    \begin{equation}
      \label{eq:distanzaparticelle}
      \delta(t) \le C(T) \sqrt{\wass (f_0,\mu^N_0)}.
    \end{equation}
    \end{enumerate}
\end{Proposition}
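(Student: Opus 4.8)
The plan is to prove the two estimates separately, exploiting the fact that the intermediate dynamics \eqref{eq:flussoeffe} evaluates the rank through the \emph{regular} spatial density $Sf_t$ rather than through an empirical measure; this regularizes the interaction and makes part i) a standard Dobrushin-type stability estimate, confining the genuine topological difficulty to part ii). For part i), I would freeze the kernel $k_t(x,y)\coloneq K\bigl(M[Sf_t](x,|x-y|)\bigr)$, which by Lemma \ref{lemma:stimemasse} and the Lipschitz continuity of $K$ is Lipschitz in $(x,y)$ on the compact supports given by \eqref{eq:supporto}. Both $f_t$ and $\nu^N_t$ are transported by the \emph{same} velocity field $b[g](x,v)=\bigl(v,\int k_t(x,y)(w-v)\,g(\de y\,\de w)\bigr)$, evaluated at $g=f_t$ and at $g=\nu^N_t$ respectively. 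Since $(y,w)\mapsto k_t(x,y)(w-v)$ is Lipschitz on the supports, one has $|b[f_t](z)-b[\nu^N_t](z)|\le C\wass(f_t,\nu^N_t)$, while $b[f_t]$ is itself Lipschitz in $z=(x,v)$. I would then take an optimal coupling $\pi_0$ of $f_0$ and $\nu^N_0=\mu^N_0$, transport it along the two flows to obtain a coupling $\pi_t$ of $f_t$ and $\nu^N_t$, and bound the time derivative of $\int|Z_t-Z_t'|\,\de\pi_0$ by $C\int|Z_t-Z_t'|\,\de\pi_0$; since $\wass(f_t,\nu^N_t)\le\int|Z_t-Z_t'|\,\de\pi_0$, Gr\"onwall yields $\wass(f_t,\nu^N_t)\le\e^{Ct}\wass(f_0,\mu^N_0)$, which is \eqref{distanzaf}.

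For part ii) I would differentiate $\delta(t)$ and compare the forces $W[Sf_t,\nu^N_t](X^f_i,V^f_i)$ and $W[S\mu^N_t,\mu^N_t](X^N_i,V^N_i)$. Writing both as sums over $j$ with coefficients $a^f_{ij}=K\bigl(M[Sf_t](X^f_i,|X^f_i-X^f_j|)\bigr)$ and $a^N_{ij}=K\bigl(M[S\mu^N_t](X^N_i,|X^N_i-X^N_j|)\bigr)$, I split the difference into a term where the coefficients coincide and only the velocity factors differ, bounded by $2K(0)\,\delta(t)$, and a term where only the coefficients differ, weighted by the bounded velocities $|V^N_j-V^N_i|\le 2R_v$ (from Theorem \ref{teo:agenti}). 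For the coefficient difference I would use Lemma \ref{lemma:stimemasse} to extract a Lipschitz contribution $\le C\delta(t)$ from replacing $(X^N_i,|X^N_i-X^N_j|)$ by $(X^f_i,|X^f_i-X^f_j|)$ inside $M[Sf_t]$, and Proposition \ref{prop:contM} to bound the remaining change of measure $M[Sf_t]-M[S\mu^N_t]$ by $\disc(Sf_t,S\mu^N_t)$. Taking the maximum over $i$, this gives the differential inequality $\frac{\de}{\de t}\delta(t)\le C\delta(t)+C\,\disc(Sf_t,S\mu^N_t)$.

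It then remains to control $\disc(Sf_t,S\mu^N_t)$, which is the crux of the argument. By the triangle inequality $\disc(Sf_t,S\mu^N_t)\le\disc(Sf_t,S\nu^N_t)+\disc(S\nu^N_t,S\mu^N_t)$. The first term is estimated by Proposition \ref{prop:dw1}, giving $\le C(T)\sqrt{\wass(f_t,\nu^N_t)}\le C(T)\sqrt{\wass(f_0,\mu^N_0)}$ after part i) and the contraction of marginals $\wass(Sf_t,S\nu^N_t)\le\wass(f_t,\nu^N_t)$. The second term compares two empirical measures whose atoms $X^f_i$ and $X^N_i$ are $\delta(t)$-close, so Proposition \ref{prop:Dmunu} applies; choosing $S\nu^N_t$ as the first measure and $Sf_t$ as the reference density gives $\disc(S\nu^N_t,S\mu^N_t)\le cR^{d-1}\delta(t)\|Sf_t\|_\infty+c\,\disc(S\nu^N_t,Sf_t)$, and the remaining discrepancy is again bounded through Proposition \ref{prop:dw1}. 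Crucially, this choice produces $\disc(S\nu^N_t,Sf_t)$ rather than $\disc(S\mu^N_t,Sf_t)$, avoiding a circular dependence on the very quantity being estimated. Combining these bounds yields $\disc(Sf_t,S\mu^N_t)\le C\delta(t)+C(T)\sqrt{\wass(f_0,\mu^N_0)}$, and substituting into the differential inequality, with $\delta(0)=0$ since $\nu^N_0=\mu^N_0$, a final Gr\"onwall estimate gives \eqref{eq:distanzaparticelle}. The main obstacle is exactly this discrepancy estimate: the $\sqrt{\,\cdot\,}$ loss in Proposition \ref{prop:dw1} is what forces the square root in the statement, and one must route the comparison through $\nu^N_t$ and select the reference measure in Proposition \ref{prop:Dmunu} so that the self-referential discrepancy term does not reappear.
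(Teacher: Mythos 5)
Your proposal is correct and follows essentially the same route as the paper: part i) is the standard Dobrushin/coupling stability estimate for the dynamics with rank frozen through the regular density $Sf_t$, and part ii) splits the force difference into Lipschitz contributions of order $\delta(t)$ plus a $\disc(Sf_t,S\mu^N_t)$ term, which is then controlled by the triangle inequality through $S\nu^N_t$ using Propositions \ref{prop:contM}, \ref{prop:dw1} and \ref{prop:Dmunu} (with the same choice of reference density $Sf_t$), followed by Gr\"onwall. The only cosmetic difference is that the paper telescopes the force difference into three explicit terms $(a)$, $(b)$, $(c)$ rather than grouping by coefficients versus velocity factors.
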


\begin{proof}
  Since $f_t$ is bounded, 
  $K\left(M[Sf_t](x,|x-y|)\right)$ is locally Lipschitz in $x$ and $y$
  (see
  {\it i)} and {\it ii)} of Lemma \ref{lemma:stimemasse})
  and then $W[Sf_t,\nu](x,v)$ is weakly continuous in
  $\nu$ in the sense that
  $$\sup_{x,v} |
  W[S_f,\nu_1](x,v) -W[S_f,\nu_2](x,v)| \le C(T)
  \wass(\nu_1,\nu_2).$$
  It is straightforward to prove that
  the solution $\nu_t$ of the system
  $$
  \left\{
  \begin{aligned}
    &\dot X_t=V_t \\
    &\dot V_t= W[Sf_t,\nu_t](X_t,V_t) \\
    &\nu_t = \text{ push-forward of }\nu_0\text{ along the flow } (X_t,V_t)
  \end{aligned}
\right.
  $$
  is continuous in $\wass$ w.r.t. the initial datum $\nu_0$.
  Taking $\nu_0 =f_0$ and $\nu_0=\mu_0^N$ we get the proof of {\it i)}.

  In order to estimate $\delta(t)$, we need to evaluate,
  for $0\le s \le t$ and for $i=1, \dots, N$,
  the difference $|\dot V^f_i(s)- \dot V^N_i(s)|$ given by 
  $$|W[Sf_s,\nu_s^N](X^f_i,V^f_i) - W[S\mu^N_s,\mu_s^N](X^N_i,V^N_i)|.$$
We estimate this quantity with the sum of  three terms:
  $$\begin{aligned}
    (a)\ \  & |W[Sf_s,\nu_s^N](X^f_i,V^f_i) - W[Sf_s,\nu_s^N](X^N_i,V^N_i)|,\\
    (b)\ \  & |W[Sf_s,\nu_s^N](X^N_i,V^N_i) - W[Sf_s,\mu_s^N](X^N_i,V^N_i)|,\\
    (c)\ \  & |W[Sf_s,\mu_s^N](X^N_i,V^N_i) - W[S\mu^N_s,\mu_s^N](X^N_i,V^N_i)|.\\
  \end{aligned}
  $$
  Since $K\left(M[Sf_s](x,|x-y|)\right)$ is Lipschitz in $x$,
  from the definition of $W$ it is easy to prove that
  {\it (a)} is bounded by
  $$\left(c\,\text{Lip}(K)\|Sf_s\|_{\infty} R_x^{d-1}(s)R_v+ c\|K\|_{\infty}\right) \delta(s)$$
  and that {\it (b)} is estimated by
  $$c\,\text{Lip}(K)\|Sf_s\|_{\infty} R_x^{d-1}(s)R_v \delta(s).$$
  Note that $\|Sf_s\|_{\infty}\le cR_v^d \|f_s\|_{\infty}$.
  From  Proposition 
  \ref{prop:contM} we have that  {\it (c)} is bounded by
  $$ c\text{Lip}(K)R_v \disc (Sf_s,S\mu^N_s).$$
Since
  $$ \disc (Sf_s,S\mu^N_s) \le \disc (Sf_s,S\nu^N_s)+ \disc
  (S\nu^N_s,S\mu^N_s),$$
  by Proposition \ref{prop:Dmunu} with
  $\rho =Sf_s$, $\mu^N = S\nu^N_s$ and $\nu^N = S\mu^N_s$, we get
  $$\disc (S\nu^N_s,S\mu^N_s)\le c \delta(s) + c \disc (Sf_s, S\nu_s^N).$$
  Writing $\delta(t)$ in terms of the time integral of
  $\delta(s)$ and the difference of the interaction terms 
  and using 
  the Gronwall lemma, we readily get the estimate 
  $$\delta(t) \le C(T)\int_0^t \disc (Sf_s,S\nu^N_s) \de s,$$
  valid for $0\le t \le T$.
  We conclude the proof by using Proposition \ref{prop:dw1}, eq.
  \eqref{distanzaf}
  and the fact that $\wass (Sf_s,S\nu^N_s) \le \wass (f_s,\nu^N_s)$.
\end{proof}

\begin{Theorem}
	\label{teo:mfl}
	Fixed $T>0$, let $f_t$ be a solution of
	eq. \eqref{eq:flusso} as in Theorem \ref{teo:emfl} with initial
	datum $f_0$ and let
	$\mu^N_t$ 
	be a solution of eq. \eqref{eq:particelle}  
	in the sense of Theorem \ref{teo:agenti} 
	with initial datum $\mu_0^N$. 
	Then, for $0 \le t \le T$,
	$$\wass (f_t,\mu^N_t)\le C(T) \text{max} \left\{ 
	\wass (f_0,\mu^N_0), \sqrt{\wass (f_0,\mu^N_0)}
	\right\}.
	$$
	
\end{Theorem}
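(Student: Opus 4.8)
The plan is to derive the estimate directly from Proposition~\ref{lemma:munu} by a single application of the triangle inequality for the $1$-Wasserstein distance, interposing the intermediate empirical measure $\nu^N_t$ between $f_t$ and $\mu^N_t$. Concretely, I would start from
$$\wass (f_t,\mu^N_t) \le \wass (f_t,\nu^N_t) + \wass (\nu^N_t,\mu^N_t),$$
and bound the two terms separately. The first is controlled immediately by item {\it i)} of Proposition~\ref{lemma:munu}, which gives $\wass (f_t,\nu^N_t) \le C(T)\,\wass (f_0,\mu^N_0)$.

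For the second term I would exploit that $\nu^N_t$ and $\mu^N_t$ are empirical measures carried by the same number $N$ of atoms, consistently indexed since they share the initial datum $\nu^N_0=\mu^N_0$. Using the transport plan that matches the $i$-th atom of $\nu^N_t$, located at $\left(X^f_i(t),V^f_i(t)\right)$, with the $i$-th atom of $\mu^N_t$, located at $\left(X^N_i(t),V^N_i(t)\right)$, the duality definition of $\wass$ applied to a $1$-Lipschitz test function yields
$$\wass (\nu^N_t,\mu^N_t) \le \frac 1N \sum_{i=1}^N \left(|X^f_i(t)-X^N_i(t)| + |V^f_i(t)-V^N_i(t)|\right) \le \delta(t),$$
and item {\it ii)} of Proposition~\ref{lemma:munu} bounds the right-hand side by $C(T)\sqrt{\wass (f_0,\mu^N_0)}$.

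Combining the two estimates gives
$$\wass (f_t,\mu^N_t) \le C(T)\,\wass (f_0,\mu^N_0) + C(T)\sqrt{\wass (f_0,\mu^N_0)},$$
and since $a+\sqrt a \le 2\max\{a,\sqrt a\}$ for every $a\ge 0$, absorbing the factor $2$ into $C(T)$ produces exactly the claimed bound. The simultaneous presence of a linear and a square-root term, and hence of the maximum, is structural: the intermediate measure $\nu^N_t$ remains at \emph{linear} Wasserstein distance from $f_t$, whereas the particle-by-particle deviation $\delta(t)$ only enjoys the weaker $\sqrt{\cdot}$ rate, which itself originates from the bound $\disc \le C\sqrt{\wass}$ of Proposition~\ref{prop:dw1}.

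I expect no genuine obstacle at this final stage, since all the analytic difficulty — in particular the control of the discrepancy between the empirical spatial density and the bounded density $Sf_s$, and the attendant loss of half a power — has already been absorbed into Proposition~\ref{lemma:munu}. The only point demanding a word of care is the metric convention: $\delta(t)$ is defined with the sum $|X^f_i-X^N_i|+|V^f_i-V^N_i|$, whereas the cost underlying $\wass$ on $\R^d\times\R^d$ is the ambient distance; since $|(a,b)|\le |a|+|b|$, the matching coupling gives the displayed bound with no extra constant, and in any case the equivalence of norms on $\R^{2d}$ would be harmlessly folded into $C(T)$.
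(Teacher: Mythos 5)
Your argument is correct and coincides with the paper's own proof: the same triangle inequality through the intermediate measure $\nu^N_t$, with the first term bounded by \eqref{distanzaf} and the second by $\wass(\nu^N_t,\mu^N_t)\le\delta(t)$ together with \eqref{eq:distanzaparticelle}. The additional remarks on the matching coupling and on absorbing $a+\sqrt a$ into the maximum are just the details the paper leaves implicit.
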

\begin{proof}
	By the triangular inequality,
	$$
	\wass(f_t,\mu^N_t) \le \wass(f_t,\nu^N_t)+\wass(\nu^N_t,\mu^N_t).
	$$
    From \eqref{distanzaf}, using that $\wass(\nu^N_t,\mu^N_t) \le \delta(t)$ and \eqref{eq:distanzaparticelle}, we get the thesis.
\end{proof}

 \section*{Appendix: Continuity of $\wass$ w.r.t. $\disc$}
In this appendix we prove 
the continuity of the Wasserstein distance $\wass$ 
w.r.t. the discrepancy distance $\disc$ for compactly supported measures.

Consider two probability measures $\mu$ and $\nu$,
both with support in the ball $B_R$ of $\R^d$.
Fix $\eps>0$ and consider a Lipschitz test function $ \phi$; it is
sufficient to consider $\phi$ with support of diameter less than
$cR$, so that $\| \phi \|_\infty \le cR$.
Given such $\phi$,  take $\delta_1>0$
such that $\text{Lip}(\phi) \delta_1 < \eps$.

By the Besicovitch covering principle  (see \cite{Bes}), 
there exist $N_\eps$ disjoint closed balls $\{B_i\}_{i=1}^{N_\eps}$ 
of radius at most $\delta_1$ such that
 $$
\mu\left ( \bigcup_{i=1}^{N_\eps} B_i \right) \ge 1- \eps.
$$ 
We estimate
$$
\int \phi \de(\mu - \nu)= \int_{\R^d  \setminus \bigcup B_i} \phi \de(\mu-\nu) + \int_{\bigcup B_i} \phi \de(\mu-\nu)\equiv A+B.
$$
We have that 
$$
\begin{aligned}
A &\le \| \phi \|_\infty \left ( \mu\left(\R^d \setminus\cup_{i=1}^{N_\eps} B_i\right)   + \nu\left(\R^d \setminus \cup_{i=1}^{N_\eps} B_i\right)\right ) \\
&\le  \| \phi \|_\infty \left (2\eps +N_\eps \disc(\mu, \nu) \right),
\end{aligned}
$$
while
$$
\begin{aligned}
B&\le \sum_{i=1}^{N_\eps}\int_{B_i} (\sup\phi-\inf\phi) \de\nu + \sum_{i=1}^{N_\eps} \int_{ B_i} \sup\phi \de(\mu-\nu)\\
&\le 2\text{Lip}(\phi)\delta_1+N_\eps \|\phi\|_\infty \disc(\mu, \nu).
\end{aligned}
$$
Hence we obtain
$$
\int \phi \de(\mu - \nu) \le cR\eps + cRN_\eps \disc(\mu,\nu).
$$
Taking $\disc(\mu,\nu)<\delta_2$  such that $N_\eps \delta_2 <\eps$, we get the thesis.



\begin{thebibliography}{00}


  
\bibitem{Attanasi_et_al} A. Attanasi, A. Cavagna, L. Del Castello, I. Giardina, T.S. Grigera, A. Jelić, S. Melillo, L. Parisi, O. Pohl, E.Shen, M. Viale
{\it Information transfer and behavioural inertia in starling flocks}
{Nat. Phys.} {\bf 10} (2014) 691--696.

\bibitem{top1} M. Ballerini, N. Cabibbo, R. Andelier, A. Cavagna, A. Cisbani, I. Giardina, V. Lecomte, A. Orlandi, G. Parisi
  {\it 
Interaction ruling animal collective behavior depends on topological rather than metric distance: Evidence from a field study}
{Proc. Nat. Acad. Sci. USA} {\bf 105} (2008) 1232--1237.


\bibitem{bbc} D. Benedetto, P. Butt\`a, E. Caglioti
  {\it Some aspects of the inertial spin model for flocks and related kinetic
  equations}
{Math. Mod. Meth. Appl. Sci.} {\bf 30} (10) (2020) 1987--2022.



  \bibitem{Bes}
    A. S. Besicovitch
    {\it A general form of the covering principle and relative
      differentiation of
      additive functions I}
    Proc. Cambridge Philos. Soc, 41 (1945), 103-110; II,
    Proc. Cambridge Philos. Soc, 42 (1946), 1-10,
    with corrections in Proc. Cambridge Philos. Soc, 43 (1947), 590.





  \bibitem{BH} W. Braun, W.K. Hepp
    {\it The Vlasov dynamics and its fluctuations in the $1/N$ limit of interacting classical particles}
    {Commun. Math. Phys} {\bf 56} (1977) 101--113.

  


\bibitem{CCJR} J.A. Ca\~{n}izo, J.A. Carrillo, J. Rosado
  {\it A well-posedness theory in measures for some kinetic models
    of collective motion} 
{Math. Mod. Meth. Appl. Sci.} {\bf 21} 3 (2011) 515--539.


\bibitem{CFTV}
  J.A. Carrillo, M. Fornasier, G. Toscani, F. Vecil
  {\it Particle, kinetic, and hydrodynamic models of swarming}
  in: Naldi G., Pareschi L., Toscani G. (eds) Mathematical Modeling of Collective Behavior in Socio-Economic and Life Sciences. Modeling and Simulation in Science, Engineering and Technology. Birkhäuser, Boston (2010)


\bibitem{carrillo-wasserstein} J.A. Carrillo, Y.-P.Choi, M. Hauray
  {\it The derivation of swarming models: mean-field limit and
    Wasserstein distances}
  in: Muntean A., Toschi F. (eds)
  {Collective Dynamics from Bacteria to Crowds.} CISM International
  Centre for Mechanical Sciences, vol. 553, Springer, Vienna (2014)


\bibitem{top2} A. Cavagna, A. Cimarelli, I. Giardina, G. Parisi, R. Santagati, F. Stefanini, R. Tavarone
{\it From empirical data to inter-individual interactions: Unveiling the rules of collective animal behavior}
{Math. Mod. Meth. App. Sci.} {\bf 20} (2010) 1491--1510.



\bibitem{Chaintron-Diez} L.-P. Chaintron, A. Diez
  {\it Propagation of chaos: a review of models, methods and
applications} arXiv:2106.14812v1  (2021)


\bibitem{CS1} F. Cucker, S. Smale
{\it On the mathematics of emergence} 
{Japan. J. Math.} {\bf 2} (2007) 197--227.

\bibitem{CS2} F. Cucker, S. Smale
{\it Emergence behavior in flocks}
{IEEE Trans. Automat. Control} {\bf 52} (2007) 852--862.





\bibitem{deg-pul} P. Degond, M. Pulvirenti
{\it Propagation of chaos for topological interaction}
{Ann. Appl. Prob.} {\bf 29} (2019) 2594--2612.

\bibitem{dobrushin}
R. Dobrushin 
{\it Vlasov equations}
Funct. Anal. Appl.  (1979)  {\bf 13} 115--123 








\bibitem{GS} A.L. Gibbs,  F.E. Su
  {\it On choosing and bounding probability metrics}
  International Statistical Review / Revue Internationale De Statistique,
  vol. 70, no. 3 (2002) 419--435. 

\bibitem{G} F. Golse
  {\it On the Dynamics of Large Particle Systems in the Mean Field Limit}
  in: Muntean A., Rademacher J., Zagaris A. (eds)
  Macroscopic and Large Scale Phenomena: Coarse Graining, Mean Field Limits and Ergodicity. Lecture Notes in Applied Mathematics and Mechanics, vol 3.
  Springer, (2016).


  \bibitem{GolseHa}
  F. Golse, S.-Y. Ha
 {\it A mean-field limit of the Lohe matrix model and emergent dynamics}
 {Arch. Rational Mech. Anal.} {\bf 234} (2019) 1445--1491.
  


\bibitem{HaLiu}
S.-Y. Ha,  J.-G. Liu 
{\it A simple proof of the Cucker-Smale flocking dynamics and
mean-field limit}
{Comm. Math. Sci.} {\bf 7} (2009) 297--325.



\bibitem{haskovec} J. Haskovec
{\it Flocking dynamics and mean-field limit in the Cucker-Smale-type model with topological interactions}
{Phys. D} {\bf 261} (2013) 42--51.

\bibitem{Hauray}
  M. Hauray {\it Mean field limit for the one dimensional Vlasov-Poisson
    equation}
{Séminaire Laurent Schwartz — EDP et applications}, Exposé no. 21 (2012-2013).

\bibitem{HJ1} M. Hauray, P.-E. Jabin
  {\it N-particle approximation of the Vlasov equations with
    singular potential}
{Arch. Ration. Mech. Anal.} {\bf 183} (2007)  489--524.

\bibitem{HJ2} M. Hauray, P.-E. Jabin
  {\it Particle approximation of Vlasov equations with singular forces}
  {Ann. Sci. Ecol. Norm. Sup.} {\bf 48} (2015) 891--940.

\bibitem{J} P.-E. Jabin
  {\it A review of the mean field limits for Vlasov equations} 
Kinet. Relat. Models {\bf 7} (2014) 661--711.


\bibitem{KN} L. Kuipers, H. Niederreiter
{\it Uniform Distribution of Sequences}
  Wiley, New York (1974).

\bibitem{L} D. Lazarovici
 {\it The Vlasov–Poisson dynamics as the mean-field
    limit of rigid charges} {Commun. Math. Phys.} {\bf 347} (2016) 271--289.
  
\bibitem{LP} D. Lazarovici, P. Pickl
  {\it A mean-field limit for the Vlasov–Poisson system}
  {Arch. Ration. Mech. Anal.} {\bf 225} (2017) 1201--1231.


  





\bibitem{Neuzert} H. Neunzert
  {\it An introduction to the nonlinear Boltzmann Vlasov equation}
in: Kinetic Theories and the Boltzmann Equation (C. Cercignani ed.), 
Lecture Notes in Math., vol. 1048, Springer, Berlin, Heidelberg,
(1984) 60--110.




\bibitem{S} A.-S. Sznitman
 {\it Topics in propagation of chaos}
  in: Ecole d'Et\'e de Probabilit\'es de Saint-Flour XIX
  (P.-L. Hennequin ed.),
Lecture Notes in Math., vol. 1464, Springer-Verlag, Berlin, (1991).




\bibitem{trocheris2} M. Trocheris
  {\it Continuite entre une solution de l'equation de Vlasov
    a une dimension et le mouvement d'un systeme de points}
  (EUR-CEA-FC--1222) France (1984).

\bibitem{trocheris} M. Trocheris
  {\it On the derivation of the one dimensional Vlasov equation}
  {Transport Theory and Statistical Physics} {\bf 15}, 5 (1986) 
   597--628. 


\bibitem{vicsek}
T. Vicsek, A. Czir\'ok, E. Ben-Jacob, I. Cohen, O. Shochet
{\it Novel type of phase transition in a system of self-driven particles}
{Phys. Rev. Lett.} {\bf 75} (1995) 1226.


\end{thebibliography}
\end{document}